\newtheorem{thm}{Theorem}[section]
\newtheorem{lem}[thm]{Lemma}
\theoremstyle{definition}
\theoremstyle{remark}
\newtheorem{rmk}[thm]{Remark}
\DeclareMathOperator{\trace}{tr}
\DeclareMathOperator{\height}{ht}
\DeclareMathOperator{\res}{Res}
\DeclareMathOperator{\ind}{Ind}
\DeclareMathOperator{\GL}{GL}
\DeclareMathOperator{\SL}{SL}
\newcommand{\C}{\mathbb{C}}
\newcommand{\R}{\mathbb{R}}
\newcommand{\Z}{\mathbb{Z}}
\newcommand{\Q}{\mathbb{Q}}
\newcommand{\Young}{\mathcal{Y}}
\newcommand{\w}{\mathsf{w}}
\newcommand{\p}{\mathbf{p}}
\newcommand{\s}{\mathbf{s}}
\title{An introduction to the half-infinite wedge}
\author{Rodolfo R\'ios-Zertuche}
\keywords{Symmetric group, partition, limit shape.}
\subjclass[2010]{20C32}
\begin{document}

\begin{abstract}
After a quick review of the representation theory of the symmetric group, we give an exposition of the tools brought about by the so-called half-infinite wedge representation of the infinite symmetric group. We show how these can be applied to find the limit shapes of several distributions on partitions. We also briefly review the variational methods available to compute these limit shapes.

\end{abstract}
\maketitle
\tableofcontents
\section{Introduction}
\label{sec:intro}
The representation theory of the infinite symmetric group is located at the crossroads of a number of subjects that, together, give light to its nature and permit its application for the solution of a wide range of problems. These subjects include combinatorics, probability, arithmetic, algebraic geometry, and variational analysis. In this review article, we have tried to compile the main facts and techniques developed in relation with this theory, with the aim of reducing the steepness of the learning curve for these useful tools.

Although for many parts of the theory there exist very good introductions, such as \cite{macdonald, sagan,stanley}, there are important tools for which the standard references are still quite hard to drudge through. A good example is the half-infinite wedge fermionic Fock space, for which the standard reference \cite[Chapter 14]{kac} is admittedly inadequate for the beginner who does not want to become a specialist in a much broader theory. Other good sources, like \cite{miwajimbodate} or \cite[Section 2]{GWH1}, do not go into enough depth because they pursue different aims. Here we try to explain the tool and its techniques, and we also give examples of its application.

We feel that the increased production of research papers that leverage the usefulness of these tools (see for example \cite{pillow,branchedcoverings,GWH1,daveshokounkov,smirnov,okounkovreshetikhin,matsumoto}) justifies this new effort for their exposition.

We start with a quick review of the theory, including general facts of the representations of finite groups applied to the symmetric group in Section \ref{sec:reptheory}.

In Section \ref{sec:dimensionformulas}, we review the different available formulas for the dimension of the irreducible representations, and what is known about their asymptotics. We also include some words on the useful Okounkov-Olshanski formula for the dimension of skew Young diagrams.

Then we explain the computation of the characters of the symmetric group through the Murnaghan-Nakayama rule (Section \ref{sec:murnak}) and through the theory of symmetric functions (Section \ref{sec:symfuncs}).

All comes into play in the half-infinite wedge fermionic Fock space, which we explain in Section \ref{sec:fockspace}. Because they are so useful, we have included a few words on computations with vertex operators in Section \ref{sec:traces}. We also give a quick review of the theory of the simplest theta functions, which are quite prone to appear in this type of calculation.

Because it is useful for the examples we want to give, we also talk about the basic variational method to deal with the asymptotics of products and quotients of hooks. This is the content of Section \ref{sec:variational}.

Finally, we give several examples in Section \ref{sec:probability}. These concentrate on the issue of finding limit shapes of different distributions.

The reader who is in a hurry to learn how to use the half-infinite wedge fermionic Fock space can skip Sections \ref{sec:branching},
\ref{sec:dimensionformulas}, \ref{sec:theta}, and \ref{sec:variational}, and is encouraged to work out the example in Section \ref{sec:uniform} carefully.

Part of the material of this article was originally written for the author's thesis \cite{mythesis}. It has been significantly expanded, carefully revised, and published in the hope that others will find it useful. Needless to say, this introduction is by no means exhaustive in any direction.

\subsubsection*{Acknowledgements.}
The author is deeply indebted to his adviser, Professor Andrei Okounkov, who taught him virtually all he knows in relation to the subject of this paper. He is also very grateful to Dr. No\'e B\'arcenas Torres, Dr. Fernando Galaz Garc\'ia, and Dr. M\'onica Moreno Rocha, organizers of the workshop ``Taller de vinculaci\'on: matem\'aticos mexicanos j\'ovenes en el mundo,'' for a wonderful conference, for their encouragement to get this piece written up, and for making its publication possible. The author is also very grateful for the comments of the anonymous referees that have helped to greatly improve this paper.

\section{General representation theory of the symmetric group}
\label{sec:reptheory}
The \emph{symmetric group} $S(n)$ in $n$ elements is the set of bijections of the set of $n$ elements $\{1,2,\dots,n\}$. Its elements are known as \emph{permutations}. The group operation is the composition of permutations.

For simplicity, we will work throughout over the field of complex numbers, which we denote $\C$, and we will implicitly assume that all vector spaces are defined over this field.

A \emph{representation} $(V,\rho)$ of the symmetric group $S(n)$ consists of a vector space $V$ and a homomorphism $\rho:S(n)\to \GL(V)$ of $S(n)$ into the multiplicative group $\GL(V)$ of invertible linear transformations on $V$. A representation is \emph{reducible} if there are nontrivial subspaces $U,W\subset V$ such that $V= U\oplus W$ and $\rho$ factors through $\GL(U)\times \GL(W)$, that is, there exist representations $\rho_U:S(n)\to \GL(U)$ and $\rho_W:S(n)\to \GL(W)$ such that $\rho(\sigma)|_U=\rho_U(\sigma)$ and $\rho(\sigma)|_W=\rho_W(\sigma)$ for all $\sigma \in S(n)$. If two such subspaces fail to exist, we say that $(V,\rho)$ is an \emph{irreducible representation} of $S(n)$.

The \emph{dimension} of the representation $(V,\rho)$ is simply the dimension of the underlying vector space $V$.

Two representations $(U,\rho_U)$ and $(V,\rho_V)$ are \emph{isomorphic} if there exists a linear bijection $\phi:U\to V$ such that $\phi\circ\rho_V=\rho_U$. The existence of an isomorphism of representations implies that the underlying vector spaces $U$ and $V$ are of the same dimension.

For an arbitrary matrix $A$, we denote its trace by $\trace A$.
The \emph{character} $\chi_V:S(n)\to \C$ of a representation $(V,\rho)$ is the function $\chi_V(\sigma)=\trace\rho(\sigma)$. The character is invariant under conjugation: for every $\sigma, \tau\in S(n)$,
\[\chi_V(\tau^{-1}\sigma\tau)=\trace\rho(\tau^{-1}\sigma\tau)=\trace\left(\rho(\tau)^{-1}\rho(\sigma)\rho(\tau)\right)=\trace \rho(\sigma)=\chi_V(\sigma).\]
It is thus constant over conjugacy classes.

The classification of the finite-dimensional representations of the symmetric group can be easily achieved using the standard theory of group characters; for details we refer the reader to \cite{fultonharris,sagan}, for example. 

In order to explain this classification, we must introduce some combinatorial objects.
A \emph{partition} of a positive integer $n$ is a way to write $n$ as a sum $n=a_1+a_2+\cdots+a_k$ of positive integers $a_1,a_2,\dots,a_k$, and is usually denoted $(a_1,a_2,\dots,a_k)$. Conventionally, we order the parts so that $a_1\geq a_2\geq\dots\geq a_k$. For example, the partitions of the number 5 are $(5)$, $(4,1)$, $(3,2)$, $(3,1,1)$, $(2,2,1)$, $(2,1,1,1)$, and $(1,1,1,1,1)$. It is also standard to abbreviate repetitions using exponents. For example, $(3,3,1,1,1,1,1)$ can be written $(3^2,1^5)$. We will use the notation $|\lambda|=\sum_i\lambda_i=n$, and $\ell(\lambda)$ will denote the number of parts of $\lambda$.

The irreducible representations of the symmetric group $S(n)$ can be indexed by partitions of $n$, that is, for each partition $\lambda$ of $n$ there exists an irreducible representation $(V_\lambda,\rho_\lambda)$ of $S(n)$. We will denote by $\chi^\lambda$ the character of the representation corresponding to the partition $\lambda$.

In the case of the symmetric group $S(n)$, the conjugacy classes are also indexed by partitions: the lengths of the cycles of an element of $S(n)$ are invariant under conjugation, they completely determine the conjugacy class, and they correspond to a partition of $n$. For example, the permutation in $S(5)$ that takes $1\mapsto 2$, $2\mapsto 5$, $5\mapsto 1$, $3\mapsto 4$, $4\mapsto 3$ is denoted in cycle notation by $(1,2,5)(3,4)$; it is composed of two cycles, $(1,2,5)$ and $(3,4)$, of lengths 3 and 2 respectively. Its conjugacy type corresponds to the partition $(3,2)$, and contains all other permutations with one cycle of length 3 and one of length 2, such as $(1,2,3)(4,5)$ and $(3,4,5)(1,2)$. The partition $\lambda$ that determines the conjugacy class $C_\lambda$ of a permutation is also known as its \emph{cycle type}. The number of elements $|C_\lambda|$ in a conjugacy class is equal to $|S(n)|/\mathfrak z(\lambda)$, where
\[\mathfrak z(\lambda)=\prod_{i=1}^\infty i^{m_i(\lambda)} m_i(\lambda)!\]
is the size of the centralizer of $C_\lambda$, and $m_i(\lambda)$ is the number of parts in $\lambda$ that are equal to $i$.

The only permutation of cycle type $(1,1,\dots,1)$ is the identity. A permutation $\sigma$ of cycle type $(2,2,\dots,2,1,1,\dots,1)$ is an involution, that is, $\sigma\circ\sigma$ is the identity. A permutation of cycle type $(2,2,\dots,2)$ is an involution without fixed points.

Since the characters are constant over conjugacy classes, we usually denote the character of a permutation of cycle type $\mu$ in the irreducible representation corresponding to the partition $\lambda$ by $\chi^\lambda(\mu)$. In the following sections, we study some related objects that provide ways to compute the values of these functions more or less explicitly, and that relate them to other combinatorial constructions.

Because of their connection with the classification of irreducible representations and of conjugacy classes, partitions play a crucial role in the theory. We remark that to a partition $\lambda=(\lambda_1,\lambda_2,\dots,\lambda_k)$, where $\lambda_1\geq\lambda_2\geq\cdots\geq\lambda_k\geq0$, we can assign a \emph{Young diagram} consisting of a drawing of rows of boxes stacked on each other, with row $i$ consisting of of $\lambda_i$ square boxes for $i=1,2,\dots,k$. For example, if our partition is $(5,4,4,2)$, the diagram looks like this:
\[\yng(5,4,4,2)\]
In this diagram, each square is called a \emph{cell}.

We recall two more general facts about group representations. The first one is the decomposition of the group algebra $\C G$ of any finite group $G$, which is itself a representation of $G$. The group algebra $\C G$ is the algebra obtained by forming the vector space over $\C$ whose generators are the elements of $G$ and whose product operation is inherited from the operation of $G$. It turns out to contain each of the irreducible representations of $G$ exactly as many times as their dimensions, that is,
\[\C G=\bigoplus_i V_i^{\oplus\dim V_i},\]
where the subspaces $V_i$ range over the different irreducible representations of $G$. See, for example, \cite[Section 1.10]{sagan}.

The second one is \emph{Schur's lemma}: Let $V$ and $W$ be two irreducible representations of $G$. If $\phi:V\to W$ is a $G$-homomorphism (i.e., $\phi$ commutes with the action of $G$), then either $\phi$ is a $G$-isomorphism, or $\phi$ is the zero map. See, for example, \cite[Section 1.6]{sagan}.

\subsection{The branching rule and Frobenius reciprocity}
\label{sec:branching}
The restriction of a representation $\rho:S(n)\to \GL(V)$ of $S(n)$ to a smaller symmetric group $S(m)$, $m<n$, induces a restriction of the corresponding representation, which we denote $\res_{S(m)} V$. If $\rho$ is irreducible and corresponds, say, to the partition $\lambda$ of $n$, $\res_{S(m)} V$ is, in general, reducible. The \emph{branching rule} gives its decomposition into irreducible representations. It states that, if $n=m+1$ and $V^\lambda$ is the irreducible representation corresponding to the partition $\lambda$, then
\[\res_{S(m)}V^\lambda=\bigoplus_{\lambda = \mu\cup\square}V^{\mu}, \]
where the direct sum is taken over all partitions $\mu$ of $m$ from which $\lambda$ can be obtained by adjoining one cell $\square$.
For example,
\[
\res_{S(11)}V^{(4,3,3,2)}=V^{(4,3,3,1)}\oplus V^{(4,3,2,2)} \oplus V^{(3,3,3,2)}.
\]
Note that, iterating, this gives a complete description of the decomposition corresponding to any pair $m<n$.
For a proof, see for example \cite[Section 2.8]{sagan} or \cite[Section 7.3]{fulton}. If $\chi$ is the character of $\rho$, then we will denote by $\res_{S(m)} \chi$ the character of $\res_{S(m)}V$. 

A representation $\rho:S(m):\to \GL(V)$ induces a representation $\ind_{S(n)} V$ of $S(n)$ (see \cite{sagan} for a full discussion).
There is a version of the branching rule in this case:
\[\ind_{S(n)}V^\lambda=\bigoplus_{\lambda \cup \square= \mu}V^{\mu}, \]
where the sum is taken over all partitions $\mu$ that can be formed by adjoining a cell $\square$ to $\lambda$. Let $\chi$ be the character of $\rho$ and denote by $\ind_{S(n)} \chi$ the character of the induced representation. There is a way to compute $\ind_{S(n)} \chi$ using Frobenius reciprocity, described next.

The vector space of real-valued functions on $S(n)$ contains a subspace generated by the characters on $S(n)$. This subspace has an inner product
\[\langle \chi_1,\chi_2\rangle_{S(n)}=\frac 1{n!} \sum_{\sigma\in S(n)} \chi_1(\sigma)\chi_2(\sigma).\]
The characters of irreducible representations turn out to be orthonormal with respect to this inner product.
The relation
\[\langle \ind_{S(n)}\chi_1,\chi_2\rangle_{S(n)}=\langle \chi_1,\res_{S(m)}\chi_2\rangle_{S(m)},\]
which is true for all characters $\chi_1$ and $\chi_2$ of representations of $S(m)$ and $S(n)$, $m<n$, respectively, is known as \emph{Frobenius reciprocity}. For a proof see \cite[Section 3.3]{fultonharris}, \cite[Theorem 1.12.6]{sagan}.

\section{Formulas for the dimension}
\label{sec:dimensionformulas}
\subsection{The hook formula}
\label{sec:hookformula}
The formula obtained in 1954 by J. Frame, G. Robinson, and R. Thrall \cite{framerobinsonthrall} to calculate the dimension of the irreducible representation of $S(n)$ corresponding to the partition $\lambda$, or equivalently, to calculate the character $\chi^\lambda(1,1,\dots,1)$, has become known as the \emph{hook formula} due to the combinatorial objects it involves.

To each cell in a Young diagram we can assign several quantities. For our purposes, the most important one is the \emph{hook length}, which we now define. For a given cell, the \emph{hook} consists of the cell itself, together with all the cells to the right of it and all  the cells below it. So, for example, in this diagram we have marked with bullets the hook of cell $(2,2)$:
\[\young(\hfil\hfil\hfil\hfil\hfil,\hfil\bullet\bullet\bullet,\hfil\bullet\hfil\hfil,\hfil\bullet)\]
The \emph{hook length} is the number of cells in the hook. In the example of the last diagram, the hook length is 5. In the following diagram we have filled in each cell with the corresponding hook length:
\[\young(87541,6532,5421,21)\]

Now let $\lambda$ be a partition of $n$, and let $h(\lambda)$ be the product of the hook lengths of $\lambda$. For the example above, $\lambda=(5,4,4,2)$, $n=15$, and $h(\lambda)=16\,128\,000$. Then the hook formula, discovered by Frame, Robinson, and Thrall, says that
\begin{equation}\label{eq:hookformula}
\boxed{\dim\lambda:=\chi^\lambda(1,1,\dots,1)=\frac{n!}{h(\lambda)}.}
\end{equation}
Here, $(1,1,\dots,1)$ stands for the partition with $n$ parts equal to 1 and corresponds to the conjugacy class of the identity.

In the example above, formula \eqref{eq:hookformula} gives
\[\dim(5,4,4,2)=\chi^{(5,4,4,2)}(1,1,\dots,1)=\frac{15!}{h((5,4,4,2))} =81\,081.\]
A proof of the hook formula can be found in the original paper \cite{framerobinsonthrall}, and also in \cite[Examples I.1.1 and I.7.6]{macdonald}, \cite[Section 3.10]{sagan}, and \cite[Corollary 7.21.6]{stanley}.

\subsection{Other formulas}\label{sec:otherformulas}
There is an alternative way to express this as a function of the coordinates of the partition $\lambda$ \cite[Example I.1.1]{macdonald}:
\begin{equation}\label{eq:hookalternative}
\boxed{
\dim\lambda=\frac{|\lambda|!\prod_{i<j}(\lambda_i-\lambda_j-i+j)} {\prod_i(\lambda_i+\ell(\lambda)-i)!}.}
\end{equation}
Here $\ell(\lambda)$ denotes the number of parts of $\lambda$.

Yet another formula can be obtained if one uses instead the so-called Frobenius coordinates for a partition. For each partition $\lambda$, there is a \emph{transposed partition} $\lambda'$ that is obtained by exchanging rows by columns in the Young diagram; for example, if $\lambda=(5,4,4,2)$, then $\lambda'=(4,4,3,3,1)$:
 \[\lambda=\yng(5,4,4,2)\qquad\lambda'=\yng(4,4,3,3,1)\]
  The \emph{Frobenius coordinates} are defined as follows: for a partition $\lambda=(\lambda_1,\lambda_2,\dots,\lambda_k)$, let $d$ be the largest integer $\leq k$ such that $\lambda_d\geq d$. In other words, $d$ is the size of the diagonal of $\lambda$. Then let $p_i=\lambda_i-i$ and $q_i=\lambda'_i-i$ for $i=1,2,\dots,d$, where the numbers $\lambda'_i$ are the parts of the transposed partition $\lambda'$. Of course, $p_i,q_i\geq0$. For $\lambda=(5,4,4,2)$ as above, the Frobenius coordinates are $p=(4,2,1)$, $q=(3,2,0)$. We have \cite[eq. (2.7)]{olshanski}
\[\boxed{\dim\lambda=\frac{\prod_{i<j}(p_i-p_j)(q_i-q_j)}{\prod_{i,j}(p_i+q_j+1)
\prod_i p_i!\,q_i!}.}\]
This can also be written as a determinant \cite[eq. (2.2)]{borodinokounkovolshanski}:
\[\frac{\dim\lambda}{|\lambda|!}=\det\left[\frac{1}{(p_i+q_j+1)p_i!\,q_i!}\right]_{1 \leq i,j\leq d}.\]

\subsection{Combinatorial interpretation}
\label{sec:combinatorialdim}

Combinatorially, $\dim\lambda$ is the number of ways in which one can construct the Young diagram of $\lambda$ by adding one cell at a time, starting from the upper left corner and making sure that, at each step, the diagram obtained corresponds to some partition. So for example, for the partition $(3,2)$, one could add the cells in the following orders:
\[\young(123,45)\quad
\young(124,35)\quad
\young(125,34)\quad
\young(134,25)\quad
\young(135,24)\quad
\]
In each row and column the numbers must be increasing. We conclude that $\dim(3,2)=5$. This interpretation follows immediately from the branching rule (see Section \ref{sec:branching}) or from the Murnaghan-Nakayama rule (see Section \ref{sec:murnak}).

\subsection{Asymptotic formula}

Vershik and Kerov \cite{vershikkerov1985} obtained noteworthy asymptotic estimates for the maximal dimension of the irreducible representation corresponding to the partition $\lambda$ as $|\lambda|\to\infty$.

They found that there exist positive constants $c$ and $c'$ such that
\[\sqrt{n!}\,\exp\left(-\frac c2\sqrt n\right)\leq\max_{\lambda\in S(n)}\dim\lambda\
\leq\sqrt{n!}\,\exp\left(-\frac {c'}2\sqrt n\right).\]
Since the actual values of $c$ and $c'$ are not known explicitly, the true value of this formula lies on the description it gives of the asymptotic situation as $n\to\infty$.

It is also possible to obtain asymptotics for the typical dimensions for some probability distributions; see Section \ref{sec:probability} for examples.

\subsection{The Okounkov-Olshanski formula}
\label{sec:okolshformula}
In this section we will state a formula due to A. Okounkov and G. Olshanski \cite{okounkovolshanski} for the dimension $\dim (\lambda/\mu)$ of a skew diagram.

Let $\lambda$ and $\mu$ be two partitions, and assume that $\mu_i\leq\lambda_i$, $i=1,2,\dots$ A \emph{skew Young diagram} $\lambda/\mu$, consists of the cells of the Young diagram of $\lambda$ that would not belong to the Young diagram of $\mu$ if we were to overlap them. For example, let $\mu=(3,1)$ and $\lambda=(5,4,4,2)$. Then
\[\lambda/\mu=\young(:::\hfil\hfil,:\hfil\hfil\hfil,\hfil\hfil\hfil\hfil,\hfil\hfil).\]

The \emph{dimension} of $\lambda/\mu$ is defined as a straight-forward generalization of the combinatorial principle explained in Section \ref{sec:combinatorialdim}: it is the number of ways in which one can assign the numbers $1,2,\dots, |\lambda|-|\mu|$ to the cells $\lambda/\mu$ in such a way that in each row the numbers increase as one moves downwards, and in each column the numbers increase as one moves to the right.

To simplify notations, let
\[(x\downharpoonright k)=x(x-1)(x-2)\cdots(x-k+1).\]
This is known as the \emph{Pochhammer symbol} and another common notation for it is $(x)_k$.

We define the \emph{shifted Schur polynomials} in $n$ variables, indexed by the partition $\mu$, as the following ratio of two $n\times n$ determinants:
\[\s_\mu(x_1,\dots,x_n)=\frac{\det [(x_i+n-i\downharpoonright \mu_j+n-j)]} {\det[(x_i+n-i\downharpoonright n-j)]},\quad 1\leq i,j\leq n.\]
These polynomials satisfy \cite{okounkovolshanski} a stability condition
\[\s_\mu(x_1,\dots,x_n,0)=\s_\mu(x_1,\dots,x_n),\]
which allows us to take inverse limits, just as in the definition of symmetric functions (see Section \ref{sec:symfuncs}). The resulting objects are known as \emph{shifted Schur functions} and we will denote them by $\s_\mu(x_1,x_2,\dots)$. These are also sometimes called \emph{Frobenius Schur functions} (see for example \cite{frobeniusschur}). The algebra $\Lambda^*$ they generate coincides with the algebra generated by the shifted-symmetric power functions $\p_k$.

We then have \cite{okounkovolshanski}:
\begin{equation}\label{eq:dimensionofskewdiagram}
\boxed{\dim(\lambda/\mu)=\frac{\s_\mu(\lambda)\,\dim\lambda}{ (|\lambda|\downharpoonright|\mu|)}.}
\end{equation}

\section{The Murnaghan-Nakayama rule}
\label{sec:murnak}
The Murnaghan-Nakayama rule gives a combinatorial algorithm to compute the value of the character $\chi^\lambda(\mu)$ of an irreducible representation of the symmetric group $S(n)$ corresponding to the partition $\lambda$ and evaluated at an element of cycle type $\mu$.

In order to state the rule, we need to define some combinatorial objects. Recall the definition of a skew Young diagram from Section \ref{sec:okolshformula}.
A special kind of Young diagram that we will be interested in is the \emph{strip}, which is defined to be a connected skew Young diagram that does not contain a $2\times2$ block:
\[\yng(2,2)\]
A skew Young diagram is connected if we can get from any cell to any other cell jumping on \emph{adjacent} cells. For example, the following are strips:
\begin{equation}\label{strips}
\young(::\hfil,::\hfil,::\hfil,:\hfil\hfil,:\hfil,\hfil\hfil) \qquad\young(\hfil\hfil\hfil\hfil,\hfil,\hfil,\hfil)\qquad \young(\hfil,\hfil,\hfil,\hfil,\hfil)
\end{equation}
The following are \emph{not} strips:
\[\young(::\hfil,::\hfil,::\hfil,:\hfil,:\hfil,\hfil\hfil)\qquad \young(\hfil,\hfil\hfil,:\hfil,\hfil\hfil)\qquad\young(:::::\hfil,::\hfil\hfil\hfil\hfil,:\hfil\hfil,\hfil\hfil\hfil,\hfil)\]
Here, the first example is disconnected (it is not enough to have a common corner for cells to be adjacent), the second example cannot be obtained as a difference of Young diagrams, and the third contains a $2\times2$ block.

Now suppose that the partitions $\mu$ and $\lambda$ are both partitions of the same number, that is, $\sum \lambda_i=\sum \mu_i$. A \emph{$\mu$-strip decomposition} of the partition $\lambda$ is a way to recover $\lambda$ by successively adjoining strips of size $\mu_1,\mu_2,\dots$. More precisely, it is a sequence of partitions $\eta^1,\eta^2,\dots,\eta^k$, where $k$ is the number of parts in $\mu$, such that $\eta^1$ is a strip of size $\mu_1$, $\eta^i/\eta^{i-1}$ is a strip of length $\mu_i$, $i=2,\dots, k$, and $\eta^k=\lambda$. For example, if $\lambda=(5,4,4,2)$ and $\mu=(5,3,2,2,2,1)$, a $\mu$-strip decomposition of $\lambda$ can be represented as follows: we will use number 1 to represent the strip of size 5, then number 2 to represent the strip of size 3, and so on, always assigning the number $i$ to the strip of length $\mu_i$:
\begin{equation}\label{decompositions}
\young(11136,1223,1255,44)\qquad\young(11233,1225,1445,16)\qquad\young(11244,1225,1335,16)
\end{equation}
Note that the second and third examples are different only in the order of the third and fourth strips.

The \emph{height} of a strip $\lambda/\mu$ is equal to the \emph{vertical} distance between the center of a cell in the lowest row of the strip to the center of a cell in the highest row of the strip, assuming that each cell has side length 1. For example, the strips in \eqref{strips} have heights 5, 3, and 4, respectively. The \emph{height of a $\mu$-strip decomposition} equals the sum of the heights of the involved strips. For example, the height of the first decomposition in \eqref{decompositions} is $2+1+1+0+0+0=4$, and the heights of the other two are both equal to $3+1+0+0+1+0=5$.

The \emph{Murnaghan Nakayama rule} is the simple assertion that
\begin{equation}\label{murnakruleeq}
\boxed{\chi^\lambda(\mu)=\sum_{\{\eta_i\}} (-1)^{\height \{\eta^i\}},}
\end{equation}
where the sum is taken over all the $\mu$-strip decompositions $\{\eta^i\}$ of $\lambda$. For example, if $\lambda=(4,3,2)$ and $\mu=(6,2,1)$, then there are exactly two $\mu$-strip decompositions of $\lambda$, namely (in the same notations as above),
\[\young(1111,123,12)\qquad\textrm{and}\qquad\young(1111,122,13)\]
The heights of these decompositions are $2+1+0=3$ and $2+0+0=2$, respectively, so the Murnaghan-Nakayama rule implies that
\[\chi^{(4,3,2)}((6,2,1))=(-1)^3+(-1)^2=0.\]

Proofs of the Murnaghan-Nakayama rule can be found in \cite[Exercises I.3.11 and I.7.5]{macdonald}, \cite[Section 4.10]{sagan}, or \cite[Theorem 7.17.3]{stanley}.

\section{Symmetric and shifted-symmetric functions}
\label{sec:symfuncs}
 \subsection{Schur functions and the characters of the symmetric group}
 \label{sec:normalschurskewschur}
 
 For variables $x_1,x_2,\dots,x_n$, a \emph{symmetric polynomial} is a polynomial $p(x_1,x_2,\dots\,x_n)$ with coefficients in the rational numbers $\Q$ such that $p$ is invariant by any reordering of the variables:
\[p(x_1,x_2,\dots,x_n)=p(x_{\sigma(1)},x_{\sigma(2)},\dots,x_{\sigma(n)})\]
for all $\sigma\in S(n)$. Denote by $P_n$ the set of all symmetric polynomials in $n$ variables.

Let $\psi:P_{n+1}\to P_n$ be the specialization to $n$ variables, given by
\[\psi(p)(x_1,x_2,\dots,x_{n+1})=p(x_1,x_2,\dots,x_n,0).\]
Using the identification provided by $\psi$, we can form the inverse limit
\[\Lambda=\underleftarrow \lim \,P_n\]
whose elements are known as \emph{symmetric functions}, and consist of formal sums of monomials of the form
\[a x_{i_1}x_{i_2}\cdots x_{i_k}\]
in countably many variables $x_1,x_2,\dots$, for $a\in \Q$, $i_1,\dots,i_k\in \mathbb N$, where the numbers $i_j$ may be repeated. By construction, symmetric functions are invariant by any finite permutation. They form a ring, denoted $\Lambda$, that is obviously also a vector space over $\Q$.

A basis of the symmetric functions $\Lambda$ is given by the \emph{symmetric power functions} $p_\mu$. To define them, we let
\[p_k(x_1,x_2,\dots)=x_1^k+x_2^k+\cdots,\quad k=1,2,\dots,\]
and then extend this definition to partitions $\mu=(\mu_1,\mu_2,\dots,\mu_k)$ by letting
\[p_\mu=p_{\mu_1}p_{\mu_2}\cdots p_{\mu_k}.\]
In other words, $\Lambda$ is algebraically generated by the functions $p_k$ \cite{macdonald}.

While the symmetric power functions are by far the simplest linear basis of the symmetric functions $\Lambda$, there are a few other bases of $\Lambda$ that are of interest. Among these is the one composed of the so-called \emph{Schur functions} $s_1,s_2,\dots$, which can be defined in many equivalent ways. One way is as the sum \cite{macdonald}
\begin{equation}\label{powertoschur}
\boxed{
s_\lambda=\sum_\rho \frac{\chi^\lambda(\rho)}{\mathfrak z(\rho)} p_\rho,
}
\end{equation}
where the sum is over all partitions $\rho=(\rho_1,\rho_2,\dots,\rho_k)$, $\mathfrak z(\rho)=\prod_i i^{m_i(\rho)}m_i(\rho)!$, $m_i(\rho)$ is the number of parts $\rho_j$ of $\rho$ that are equal to $i$, and $\chi^\lambda(\rho)$ stands for the character of the representation of the symmetric group corresponding to the partition $\lambda$ evaluated at an element of cycle type $\rho$.

A second way to define the Schur functions is by their characterization as being the symmetric functions that specialize (by setting the variables $x_{n+1},x_{n+2},\dots$ to zero) to the \emph{Schur polynomials}, defined by quotients of determinants:
\begin{equation}
s_\lambda(x_1,x_2,\dots,x_n,0,0,\dots)=\frac{\det(x_i^{\lambda_j+n-j})_{i,j=1,\dots,n}}{\det(x_i^{n-j})_{i,j=1,\dots,n}}.
\end{equation}
This form of the definition, together with equation \eqref{powertoschur} gives a way to compute the characters $\chi^\lambda(\rho)$. Other ways to define $s_\lambda$ are described in the following sections.

We want to mention an extremely important generalization that will come up in Section \ref{sec:vertexopvalue}. Endow the space of symmetric functions with the inner product induced by
\[(s_\lambda,s_\mu)=\delta_{\lambda\mu}.\]
 Define the \emph{skew Schur functions} by
\[(s_{\lambda/\mu},s_\nu)=(s_\lambda,s_\mu s_\nu).\]
In other words, if we have the expansion of the product in the basis given by the Schur functions $s_\lambda$,
\[s_\mu s_\nu=\sum_\lambda c_{\mu\nu}^\lambda s_\lambda,\]
then we define
\[s_{\lambda/\mu}=\sum_\nu c_{\mu\nu}^\lambda s_\nu.\]
The functions $s_{\lambda/\mu}$ vanish unless the Young diagram of $\mu$ is completely contained inside the Young diagram of $\lambda$ (see \cite[(I.5.7)]{macdonald}). The numbers $c^\lambda_{\mu\nu}$ have a nice combinatorial interpretation known as the \emph{Littlewood-Richardson rule} that is out of the scope of this exposition; see \cite[Section 4.9]{sagan}, \cite[Section I.9]{macdonald}.

Apart from the symmetric power functions and the Schur functions, there are other bases of the space of symmetric functions that appear frequently in the subject. The first one is the one one given by the \emph{elementary symmetric functions},
\[e_\lambda=\prod_ie_{\lambda_i},\quad e_r=\sum_{i_1<i_2<\cdots<i_r}x_{i_1}x_{i_2}\cdots x_{i_r},\quad r\in \Z_+.\]

There is also the basis given by the \emph{monomial symmetric functions},
\[m_\lambda=\sum_\alpha x_1^{\alpha_1}x_2^{\alpha_2}\cdots,\]
where the sum is over all $\alpha\in\Z^\infty$ such that $\alpha_i=0$ for all but finitely many $i$, and the $\alpha_i$ that do not vanish are a permutation of the parts $\lambda_i$ of the partition $\lambda$. These directly generate the space of symmetric functions.

One last basis we need to mention is the one given by the \emph{complete symmetric functions} $h_r$, given by
\[h_r=\sum_{|\lambda|=r}m_\lambda.\]

Many interesting facts are known about the interaction of these bases and of their generating functions; the reader is encouraged to look at \cite[Chapter I]{macdonald}. We mention only two relations that will come up in Section \ref{sec:vertexopvalue}:
\begin{equation}\label{eq:skewschurh}
s_{\lambda/\mu}=\det\left(h_{\lambda_i-\mu_j+j-i}\right),\quad\sum_{r=1}^\infty h_rz^r=\exp\sum_{k=1}^\infty\frac{p_kz^k}k.
\end{equation}
The reference for these is \cite[pages 25 and 70]{macdonald}.

\subsection{Shifted-symmetric functions}
\label{sec:shiftedsymmetric}

Parallel to the symmetric functions, we have the \emph{shifted-symmetric functions}, which are symmetric on the shifted variables $x_i-i+\frac12$. Section \ref{sec:fockspace} will clarify why these coordinates are very natural.

The shifted-symmetric Schur functions were already defined in Section \ref{sec:okolshformula}. The \emph{shifted symmetric power functions} are given by
\[\p_\lambda=\prod_i\p_{\lambda_i},\quad
\p_k(x)=\sum_{j=1}^\infty\left[\left(x_j-j+\tfrac12\right)^k-\left(-j
+\tfrac12\right)^k\right]+\left(1-\frac1{2^k}\right)\zeta(-k),\]
where $\zeta$ is the Riemann zeta function. The last summand appears due to the $zeta$-function regularization of the divergent sum $\sum_i(x_i-i+\tfrac12)^k$; see \cite[Remark 2.14]{branchedcoverings}.
It will be useful to record their exponential generating function \cite[eq. (0.18)]{blochokounkov}, \cite[eq. (2.8)]{branchedcoverings}
\begin{equation}\label{eq:generatingshiftedsympower}
\sum_k \frac{z^k}{k!}\p_k(x)=\sum_ie^{\left(x_i-i+\frac{1}{2}\right)z}.
\end{equation}

\subsubsection*{Moments of the limit shape.}

Let $\Omega:\R_+\to\R_+$ be a non-increasing continuous function such that
\[\int_{\R_+}\Omega(x)\,dx=1.\]
Let $\lambda^n$, $n=1,2,\dots$, be a sequence of partitions such that $|\lambda^n|\to\infty $ as $n\to\infty$.
We say that $\lambda^n$ \emph{converges} to $\Omega$ if
\[\limsup_{n\to\infty}\max_j\left|\frac1{\sqrt n}\,\lambda^n_j-\Omega\!\left(\frac j{\sqrt n}\right)\right|=0.\]

Let $L_\infty:\R\to\R$ be the function whose graph corresponds to the graph of $\Omega$ rotated $45^\circ$ in the counter-clockwise direction (or $135^\circ$ if we think of the graph of $\Omega$ drawn upside down, resembling the lower-right contour of the Young diagram of a very large partition, rescaled to have area 1). Also, after similarly rotating the Young diagram of $\lambda^n$, let $L_n$ denote the function whose graph is the upper border of the rotated Young diagram of $\lambda^n$ rescaled by $1/\sqrt n$. See the diagrams on page \pageref{gr:contour}.

Then if $\lambda^n$ converge to $\Omega$, we also have that $L_n$ converges to $L_\infty$ in the topology of the supremum norm.

 We want to give an interpretation of the shifted power functions $\p_k(\lambda^n)$: they converge to the moments of the contour of the limit shape $L_\infty$. From the definition of $\p_k$ it is clear that as $n\to\infty$,
\[\left|n^{-\frac{k+1}2}\p_k(\lambda^n)
-\int_{\R_+}\left(\left(\Omega(x)-x\right)^k-(-x)^k\right)dx\right|\to0.\]
Now,
\begin{align}
\int_{\R_+}&\left(\left(\Omega(x)-x\right)^k-(-x)^k\right)dx
=\int_0^\infty\int_0^{\Omega(x)}\frac{d}{dy}(y-x)^k\,dy\,dx\notag\\
&=\int_0^\infty\int_0^{\Omega(x)}k(y-x)^{k-1}\,dy\,dx=
2\int_{-\infty}^\infty\int_{|s|}^{L_\infty(s)}ks^{k-1}\,dt\,ds\notag\\
&=2k\int_{-\infty}^\infty s^{k-1}\left(L_\infty(s)-|s|\right)\,ds
\label{eq:momentintegral}
\end{align}
where $s=y-x$ and $t=y+x$. The change of variables is done as per the following diagram, in which the curve $\Gamma$ depicts both the graph of $\Omega(x)$ and the graph of $L_\infty(s)$, and the integration domain is shaded in gray.

\begin{center}
\includegraphics{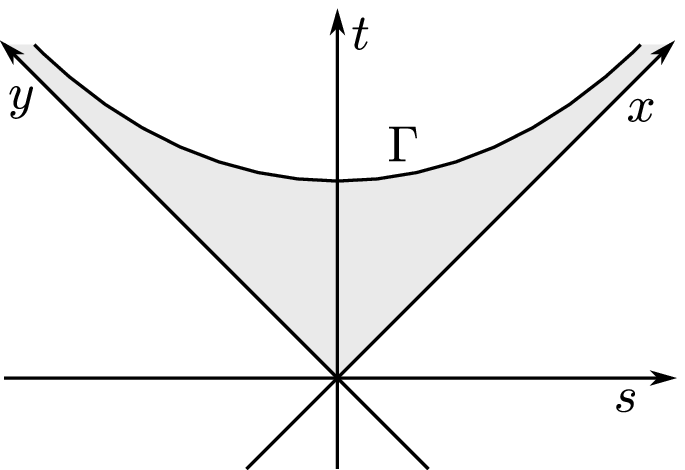}
\end{center}

In other words, if we consider $L_\infty(s)-|s|$ as a density with moments $\mu_n$,
\[\mu_n=\int_\R s^n(L_\infty(s)-|s|)\,ds,\]
then
\[\boxed{\left|\lambda^n\right|^{-\frac{k+1}{2}}\p_k(\lambda^n)\to 2k\mu_{k-1}\quad\textrm{as}\quad n\to\infty.}\]

Another derivation of the result of this section can be found in \cite{ivanovolshanski}.

\section{The half-infinite wedge fermionic Fock space}
\label{sec:fockspace}
In this section we describe an algebraic framework to work with partitions and their connections to the symmetric group. Other good sources for this material are \cite[Chapter 14]{kac}, \cite{miwajimbodate}, and \cite[Section 2]{GWH1}.

Let $\lambda=(\lambda_1,\lambda_2,\dots,\lambda_k)$ be a partition. For convenience, we identify this vector representation with an infinite sequence by extending it with zeros: \[\lambda=(\lambda_1,\lambda_2,\dots,\lambda_k,0,0,\dots),\]
so that $\lambda_i=0$ for $i>k$. Now let $\xi_i=\lambda_i-i+\frac12$ be the \emph{shifted coordinates} of $\lambda$. Obviously, for $i>k$, $\xi_i=-i+\frac12$.

We start with a countably-infinite-dimensional vector space $V$. We take a (countable) basis of $V$, and we denote its elements by $\dots,\underline {\frac52},\underline{\frac32},\underline{\frac12},\underline {-\frac12},\underline{-\frac32},\underline{-\frac52},\dots$; thus, this is a bidirectionally-infinite sequence of vectors that form a basis of $V$. The strange notation (using $\underline {\frac{2k+1}2}$ instead of something more normal like $u_k$) is well motivated by the geometric picture that we will now explain.

The \emph{0-charge sector of fermionic Fock space} $\Lambda_0^{\frac\infty2}V$ (also known as the \emph{half-infinite wedge} $\Lambda_0^{\frac{\infty}2}$) is a vector space generated by the basis of vectors $v_\lambda$ indexed by partitions $\lambda=(\lambda_1,\lambda_2,\dots)$. These are defined by:
\[v_\lambda=\underline{\xi_1}\wedge\underline{\xi_2}\wedge \underline{\xi_3}\wedge\cdots
=\underline{\lambda_1-\tfrac12}\wedge\underline{\lambda_2-\tfrac32} \wedge\underline{\lambda_3-\tfrac52}\wedge\cdots\wedge \underline{\lambda_i-i+\tfrac12}\wedge\cdots.\]
Here, $\lambda_1\geq\lambda_2\geq\lambda_3\geq\cdots\geq 0$ and the $\lambda_i$ eventually become zero. The symbol $\wedge$ indicates that adjacent factors are anti-commutative. For example, for $\lambda=(5,4,4,2)$, 
\begin{align*}
v_\lambda&=\underline{\tfrac92}\wedge\underline{\tfrac52} \wedge\underline{\tfrac32}\wedge\underline{-\tfrac32}\wedge \underline{-\tfrac92}\wedge
\underline{-\tfrac{11}2}\wedge\underline{-\tfrac{13}2}\wedge\cdots.\\
&=-\underline{\tfrac92}\wedge\underline{\tfrac32} \wedge\underline{\tfrac52}\wedge\underline{-\tfrac32}\wedge \underline{-\tfrac92}\wedge
\underline{-\tfrac{11}2}\wedge\underline{-\tfrac{13}2}\wedge\cdots,
\end{align*}
since the factors $\underline{\tfrac32}$ and $\underline{\tfrac52}$ have been exchanged.

The relation between $v_\lambda$ and $\lambda$ can be best visualized as follows.
We rotate the picture of the Young diagram of $\lambda$ by $135^\circ$ counter-clockwise, and we rescale it by $\sqrt2$. We then place black pebbles at each point $\xi_i$ on the reversed $x$-axis, and white ones in the remaining half-integers. For example, if $\lambda=(5,4,4,2)$ we start with the usual Young diagram,
\[\yng(5,4,4,2)\]
and then we rotate to get the following diagram.
\begin{center}\label{gr:contour}
\includegraphics{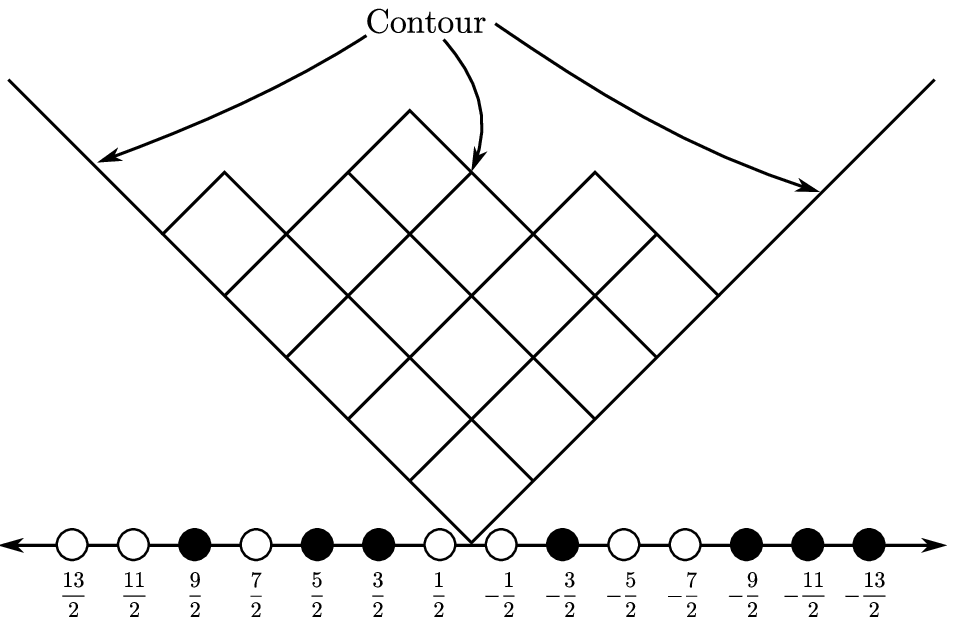}\label{fig:beadsgrid}
\end{center}
In the diagram we have also drawn the rotated axes, which extend to infinity. The \emph{contour} $L_\lambda$ of $\lambda$ is the graph of the continuous function that describes the piecewise-linear curve starting (in the picture) with the diagonal axis on the left, running along the top border (or rim) of the diagram of the partition and ending with the diagonal axis on the right.
It is clear that the contour of $\lambda$ has slope $-1$ wherever there is a white pebble, and has slope $+1$ in the intervals where  there is a black pebble, that is, on the segments corresponding to the shifted coordinates $\xi_i$ of $\lambda$. The association of sequences of pebbles to a Young diagram is known as a \emph{Maya diagram}. Observe that in such a diagram there are always as many black pebbles to the left of zero as there are white pebbles to the right of zero.

If we assign the number 0 to the white pebbles and the number 1 to the black ones, we get a sequence $\{c_i\}_{i\in \Z}$, such that $c_i\in\{0,1\}$, $c_i=0$ for all $i\ll 0$, and $c_i=1$ for all $i\gg 0$. In the case of the example above the sequence equals:
\[\dots,0,0,0,1,0,1,1,0,0,1,0,0,1,1,1,\dots\]
 The sequence completely determines the partition $\lambda$. It coincides with the sequence one gets if one assigns the number 1 to each vertical segment  and the number 0 to each horizontal segment in the contour, as follows:
\begin{center}
\includegraphics{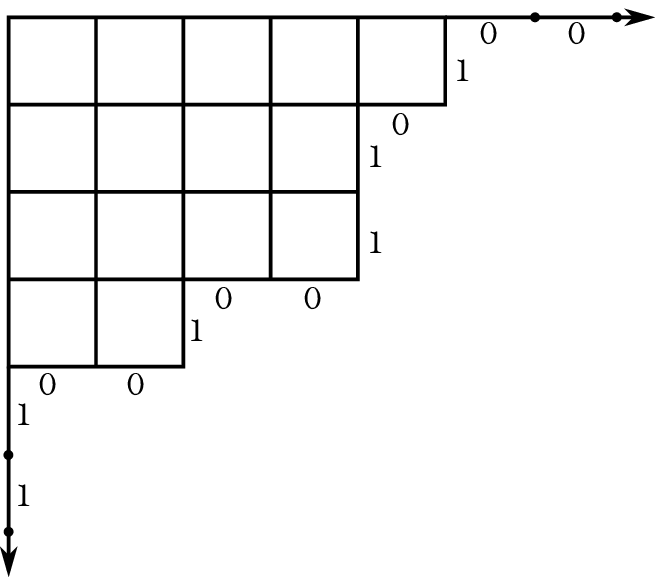}
\end{center}
Note that two sequences $\{a_i\}_{i\in\Z}$ and $\{b_i\}_{i\in\Z}$ of zeros and ones with the above properties determine the same partition if they are translates $a_i=b_{i-n}$ of each other, for some $n\in \Z$. Both points of view ---Maya diagrams and binary sequences--- are equivalent. In what follows, we will prefer the language of pebbles placed under the rotated diagram.

The space $\Lambda_0^{\frac{\infty}2}V$ is contained inside \emph{fermionic Fock space} $\Lambda^{\frac{\infty}2}V$, the space generated by \emph{all} wedge products of the form
\begin{equation}\label{eq:generator}
v_{\{s_i\}}=\underline{ s_1}\wedge \underline{ s_2}\wedge \underline{ s_3}\wedge\cdots,
\end{equation}
where $s_i$ is a half-integer contained in $\Z+\frac12$, $s_1\geq s_2\geq s_3\geq\cdots$, and $s_i-1=s_{i+1}$ for $i$ large enough. The 0-charge sector is simply the subspace whose generators can be identified with partitions. The basis of $\Lambda^{\frac{\infty}2}_0$ given by vectors of the form \eqref{eq:generator} is characterized by containing only those generators for which there are as many positive numbers $s_i$ present as there are negative numbers $s_i$ missing from the wedge.

The fermionic Fock space is a formalization of the quantum-mechanical model known as \emph{Dirac's sea of electrons}. In this model, the ground state corresponds to the vacuum, represented by the vector \[v_\varnothing=\underline{-\tfrac12}\wedge\underline{-\tfrac32}\wedge \underline{-\tfrac52}\wedge\cdots.\]
We say that the state represented by this vector corresponds to the vacuum because it has no electrons (factors $\underline i$ with $i>0$) and no positrons (missing factors $\underline i$ with $i<0$).
Electrons can be created by exchanging a white pebble with a black pebble; a black pebble on a positive number represents an electron excited to the energy given by that number, and a white pebble on a negative number represents a positron\footnote{I am very grateful to the anonymous referee who has clarified some history to me: ``Dirac originally identified the holes as protons. Dirac himself said many times that this error was due to his lack of imagination. This does not work at all. Later the holes were identified as prositrons and the idea of anti-particle was introduced for the first time in physics. [\dots] That works better, but is not considered nowadays to be the best way to think of positrons.'' Apparently the identification was not due to Dirac.}. A vector in $\Lambda^{\frac\infty2}_0$ corresponds to a state with equal number of electrons and positrons, and has zero net charge. The number of cells in the corresponding partition can be defined to be equal to the total energy of the system; see below.

\subsection{Basic operators}
Let $k$ be a number in $\Z+\frac12$ and let $\psi_k$ be the \emph{creation operator} in $\Lambda^{\frac\infty2}V$ defined by
\[\psi_k(v)=\underline k\wedge v.\]
Note that $\underline k$ is added at the very beginning of the sequence, so the anti-com\-mu\-ta\-tiv\-i\-ty may produce a sign (when comparing with the elements of the basis). For example,
\begin{align*}
\psi_{\frac32}(v_{(3,1)})&=\underline{\tfrac32}\wedge \left(\underline{\tfrac52}\wedge\underline{-\tfrac12}\wedge \underline{-\tfrac52}\wedge\underline{-\tfrac72}\wedge\cdots \right)\\
 &=- \left(\underline{\tfrac52}\wedge\underline{\tfrac32}\wedge\underline{-\tfrac12}
 \wedge\underline{-\tfrac52}\wedge\underline{-\tfrac72}\wedge\cdots\right)
\end{align*}
The anti-commutativity also implies that if the factor $\underline k$ is there already, the result is zero. For instance, $\psi_{\frac52}(v_{(3,1)})=0$.
We define an inner product in $\Lambda^{\frac\infty2}V$,
\[\left\langle v_{\{s_i\}},v_{\{t_i\}}\right\rangle=\delta_{\{s_i\},\{t_i\}},\]
where $\delta_{\{s_i\},\{t_i\}}$ is 1 if both sequences $s_i$ and $t_i$ are equal, and 0 otherwise. Then $\psi_k$ has an adjoint operator $\psi_k^*$, known as the \emph{annihilation operator}. The idea is that, if the spot $k$ is empty (i.e., occupied by a white pebble), $\psi_k$ adds an electron (represented by a black pebble) there, and $\psi^*_k$ removes it. These operators satisfy the anti-commutation relations
\[\psi_i\psi_j^*+\psi_j^*\psi_i=\delta_{ij},\]
\[\psi_i\psi_j+\psi_j\psi_i=0=\psi_i^*\psi_j^*+\psi_j^*\psi_i^*.\]

Define the \emph{normally ordered product}, a device invented to make it easy to keep track of the convergence of infinite sums of products of the operators $\psi_i$ and $\psi^*_i$, by
\[:\psi_i\psi_j^*:=\left\{\begin{array}{ll}\psi_i\psi^*_j,&j>0,\\-\psi^*_j\psi_i,&j<0.\end{array}\right.\]
If $i>j$, the net effect of applying $:\psi_i\psi_j^*:$ to a vector $v_\lambda$, where $\lambda$ is some partition, is to add a strip to $\lambda$ if possible, and to add a sign according to the parity of the height of the strip (this should be reminiscent of equation \eqref{murnakruleeq}). For example,
\[:\psi_{\frac{11}2}\psi^*_{-\frac32}:v_{(5,4,4,2)}=-v_{(6,6,5,5)}.\]
This is illustrated in the following picture, which should be compared to the diagram on page \pageref{fig:beadsgrid}:
\begin{center}
\includegraphics{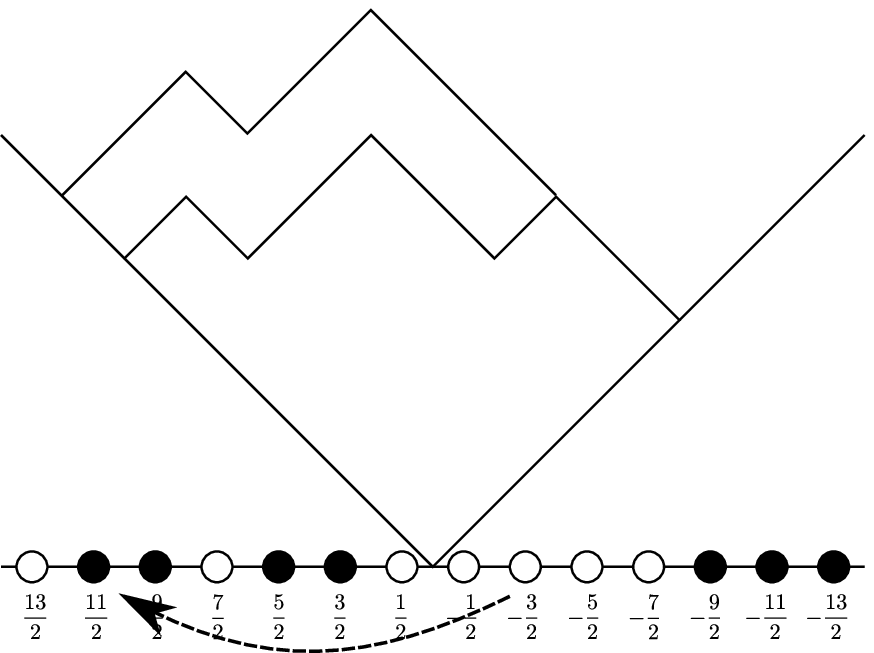}
\end{center}
The height of the strip $(6,6,5,5)/(5,4,4,2)$ added in this procedure is 3, so the sign is indeed $(-1)^3=-1$:
 \[:\psi_{\frac{11}2}\psi^*_{-\frac32}:v_{(5,4,4,2)}=-v_{(6,6,5,5)}.\]
 If the addition of the strip is not possible, the result is zero.
 If $i<j$, the effect is precisely the opposite: a strip is removed and again a sign is added according to the parity of the height of the strip; if the strip cannot be removed, the result is zero.

Let us give some examples to show how useful a device the normally ordered product turns out to be.
Right away, one can define the \emph{energy operator} $H$ by
\[H=\sum_{k\in \Z+\frac12}k\,:\psi_k\psi^*_k:\]
and it is a quick exercise to convince oneself that $Hv_\lambda=|\lambda|v_\lambda$. For any given partition $\lambda$, the sum involved in $Hv_\lambda$ is actually finite (and hence there is no doubt that it converges) because of the way the normally ordered product works. The operator $H$ is often called the \emph{Hamiltonian}, whence the notation $H$.

Another interesting operator is the \emph{charge operator} $C$ given by
\[C=\sum_{k\in Z+\frac12}:\psi_k\psi^*_k:\]
whose action should be considered in the larger space $\Lambda^{\frac\infty2}$:
\begin{multline*}
C(\underline{s_1}\wedge \underline{s_2}\wedge\cdots)=[(\textrm{\# of present positive $s_i$})\\-(\textrm{\# of missing negative $s_i$})]\underline{s_1}\wedge \underline{s_2}\wedge\cdots.
\end{multline*}
In particular, the vectors in the 0-charge sector $\Lambda^{\frac{\infty}2}_0$ are characterized by being annihilated by $C$, as their name suggests.

Let $n>0$ be an integer. We want to define operators $\alpha_{-n}$ and $\alpha_n$ that will, loosely speaking, respectively increase and decrease the energy of an electron by $n$ in all possible ways or, equivalently, they will respectively add and remove all possible strips of length $n$. We let, for $n$ in $\Z$,
\[\alpha_{-n}=\sum_{k\in \Z+\frac12}:\psi_{k+n}\psi_k^*:\]
For example,
\[\alpha_{-3}\,v_{(3,1)}=v_{(6,1)}-v_{(3,2,2)}+v_{(3,1,1,1,1)},\]
which graphically corresponds to:
\[\alpha_{-3}\left(\yng(3,1)\right)=\young(\hfil\hfil\hfil\bullet\bullet\bullet,\hfil) -\young(\hfil\hfil\hfil,\hfil\bullet,\bullet\bullet) +\young(\hfil\hfil\hfil,\hfil,\bullet,\bullet,\bullet).\]
The bullets $\bullet$ mark the cells corresponding to the added strips. Note that the signs correspond to the parities of the heights of these strips. Similarly,
\[\alpha_3\,v_{(5,4,3)}=v_{(5,4)}-v_{(3,3,3)}-v_{(5,2,2)},\]
or
\[\alpha_3\left(\yng(5,4,3)\right)=\yng(5,4)-\yng(3,3,3)-\yng(5,2,2).\]
Strips of the forms $\yng(3)$ and $\yng(2,1)$ have been removed, and again the signs correspond to the parity of their heights.

 Let $\varnothing$ stand for the empty partition. The vector $v_\varnothing=\underline{-\frac12}\wedge\underline{-\frac32}\wedge\underline{-\frac52}\wedge\cdots$ is known as the \emph{ground state}.

 Let $\mu$ be a partition. The following important identity is an immediate consequence of the Murnag\-han-Nakayama rule, equation \eqref{murnakruleeq}:
 \begin{equation}\label{eq:murnakalphas}
 \boxed{\alpha_{-\mu_1}\alpha_{-\mu_2}\cdots\alpha_{-\mu_k}v_\varnothing=\sum_\lambda \chi^\lambda(\mu)\,v_\lambda.}
 \end{equation}
 The sum is over all partitions $\lambda$, but of course the character $\chi^\lambda(\mu)$ equals zero for all but finitely many of them. Since the characters $\chi^\lambda$ are linearly independent from each other, the vectors $\prod_i \alpha_{-\mu_i}v_\varnothing$ form a basis of the entire space $\Lambda^{\frac\infty2}_0V$.

The space $\Lambda^{\frac\infty2}_0V$ is almost\footnote{The topological completion of $\Lambda^{\frac{\infty}2}_0$ with respect to the norm induced by this inner product is indeed a Hilbert space, but $\Lambda^{\frac\infty2}_0$ is not complete.} a Hilbert space when endowed with the inner product $\langle\cdot,\cdot\rangle$ induced by
\[\langle v_\lambda,v_\mu\rangle=\delta_{\lambda\mu},\]
where $\delta_{\lambda\mu}$ is 1 when the partitions $\lambda$ and $\mu$ are equal, and vanishes otherwise.
With this inner product, the adjoint $\alpha^*_n$ of $\alpha_n$ satisfies
\begin{equation*}
\alpha_n^*=\alpha_{-n},
\end{equation*}
and we have the commutation relations
\begin{equation}\label{eq:commutator}
[\alpha_k,\alpha_r]:=\alpha_k\alpha_r-\alpha_r\alpha_k=k\,\delta_{-k,r}.
\end{equation}

\subsection{$p$-cores and $p$-quotients}
\label{sec:pquotients}
The Young diagram of the partition $(3,2,2,2)$ can be constructed out of 3-strips, but the Young diagram of the partitions $(4,2,2,1)$ cannot. The theory of $p$-quotients and $p$-cores gives a framework to understand how this works, and the infinite wedge clarifies it further. We first illustrate with the $p=2$ case, and we comment on the general case at the end of the section. We mainly follow \cite[Excercise 7.59]{stanley}, \cite[Example I.1.8]{macdonald}.

It is easy to identify, given the Maya diagram of a partition, the sites from which it is possible to remove a $p$-strip: one only needs to look for a site with a black pebble such that in the site $p$ units to the right there is a white pebble. In the example above involving partition $(6,6,5,5)$, since we have a black pebble at the site $\frac{11}2$, we can remove strips of sizes 2, 5, 6, 7, 8, and 9, corresponding respectively to the white pebbles at the sites $\frac 72$, $\frac12$, $-\frac12$, $-\frac32$, $-\frac 52$, and $-\frac72$.

Note that there is an immediate correspondence between the strips we can remove and the hooks of the partition. (The hooks and their lengths are defined in Section \ref{sec:hookformula}.) Indeed, if instead of joining the left-most and right-most cells of the strip along the rim, we look at the hook whose two ends correspond to these two cells, it is easy to see that the lengths of the hook and the strip are exactly the same. In the following diagram, we see the same strip as before, and the corresponding hook shaded in grey.
\begin{center}
\includegraphics{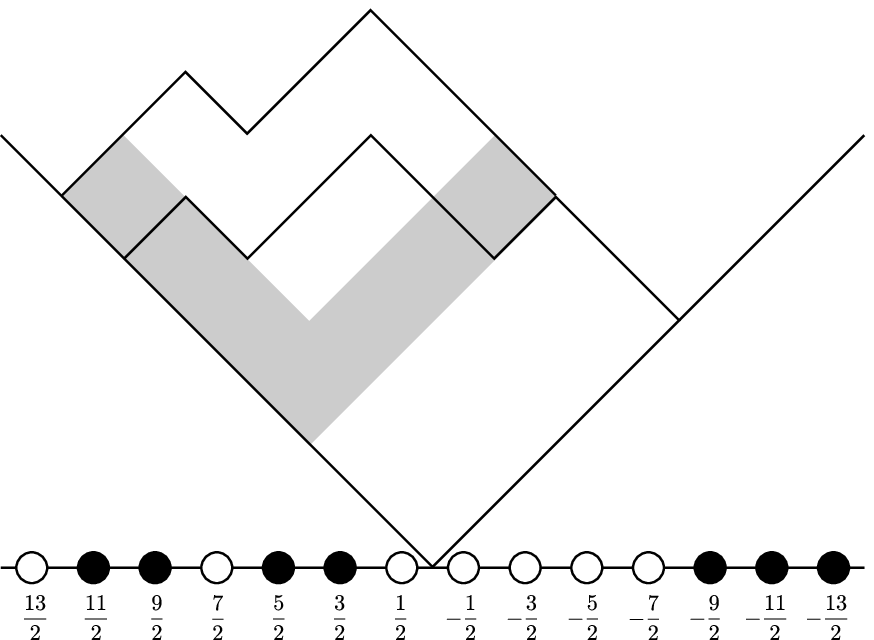}
\end{center}
Both the strip and the hook have length $7=\frac{11}2-\left(-\frac32\right)$.

\subsubsection*{2-quotients.}

Given the sequence of black and white pebbles corresponding to any partition $\lambda$, we can split it into two sequences by picking the sites alternatingly. Namely, we assign to one of these sequences all the pebbles on the sites $2n+\frac 12$, $n\in \Z$, and to the other, the pebbles on the sites $2n-\frac12$, $n\in \Z$. The resulting sequences determine two partitions $\alpha$ and $\beta$ which are together known as the \emph{2-quotient} $(\alpha,\beta)$ of $\lambda$, and they carry information on how $\lambda$ can be built by adjoining a \emph{2-core}, to be defined below, and a bunch of \emph{2-dominoes}, which are simply 2-strips, so they are of the form $\yng(2)$ or $\yng(1,1)$. For example, in the case of the partition $(5,4,4,2)$, we split the pebbles as follows:
\begin{center}
\includegraphics{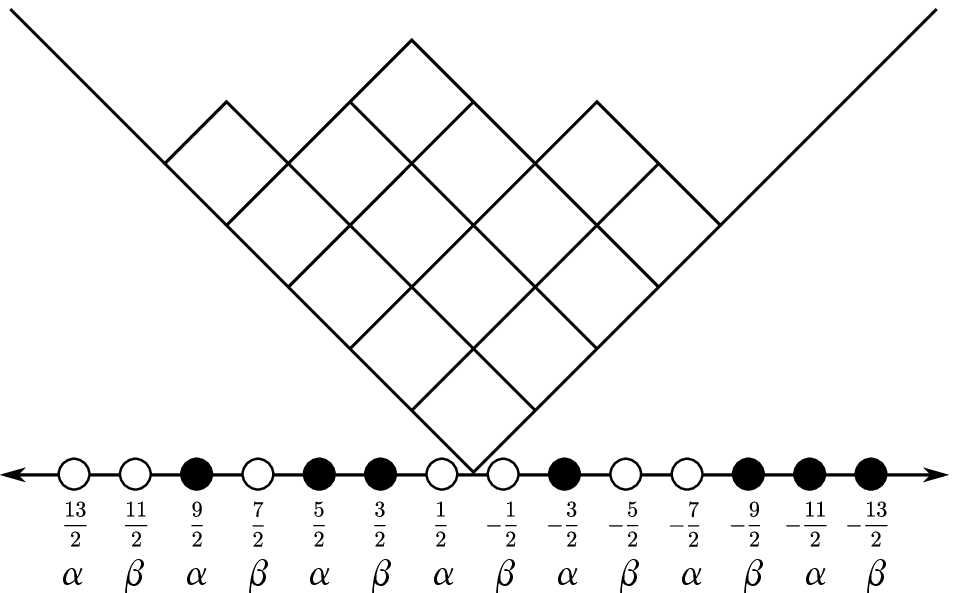}
\end{center}
and we obtain the following two sequences of black and white pebbles:
\begin{center}
\includegraphics{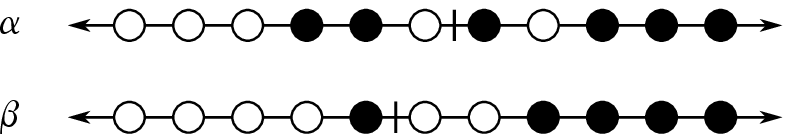}
\end{center}
(Here, the position of zero has been recorded with a vertical line. It is off-center in the sense that the number of black pebbles to the left of it does not equal the number of white pebbles to the right of it. We will discuss this below.)
These in turn correspond to the following partitions:
\[\alpha=\yng(2,2,1)\quad\textrm{and}\quad\beta=\yng(2).\]

Every time we remove a 2-domino (which is the same as a strip of length 2) from $\lambda$, we are removing a cell from one of the components $\alpha$ and $\beta$ of the 2-quotient. This is because we are exchanging two pebbles of opposite color which are exactly 2 units apart, and hence belong in the same component of the 2-quotient.

\subsubsection*{2-cores.}

In the process of removing 2-dominoes, we may get stuck before removing all the cells in the partition $\lambda$. The resulting partition is known as the \emph{2-core} and it is always shaped as a staircase, that is, it is of the form $(n,n-1,n-2,\dots,2,1)$, $n=1,2,\dots$. No 2-dominoes can be removed from a 2-core. The following are the first few 2-cores:
\[\yng(1)\qquad\yng(2,1)\qquad\yng(3,2,1)\qquad\yng(4,3,2,1)\]
It is easy to see, for example, that the 2-core of the partition $(5,4,4,2)$ is $(1)$. For instance, we may remove its 2-dominoes in the order suggested by the following diagram:
\[\young(\hfil6633,7522,7511,44)\]
These moves in turn correspond to the following removals in the 2-quotients:
\[\alpha=\young(63,52,4)\quad \beta=\young(71)\]
In terms of pebble exchanges, the picture is as follows:
\begin{center}
\includegraphics{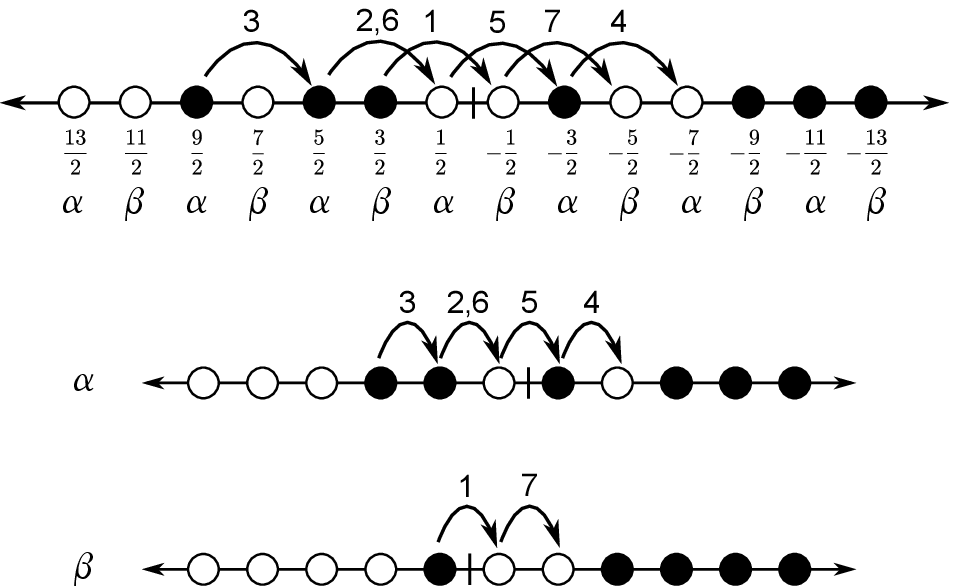}
\end{center}
As we noted before, the vertical bar denoting the original position of zero with respect to the sequences of pebbles for $\alpha$ and $\beta$ is off-centered; this is just a consequence of the presence of the non-trivial 2-core $\yng(1)$. A larger 2-core produces a larger translation of the components of the 2-quotient.

\subsubsection*{The general case.}
In the general case in which $p$ is any positive integer $\geq2$, one uses the same principle: produce $p$ different partitions by taking the chain of pebbles and distributing it into $p$ sets that alternate in the same periodic order throughout $\Z+\frac12$. The $p$-quotient is composed by these partitions.

The part of the partition that remains after removing all possible $p$-strips is the $p$-core. It is reflected in the Maya diagram in the form of translations of the chains of pebbles corresponding to the $p$-quotient away from the center. In other words, the charges of the Maya diagrams of the partitions of the $p$-quotient are what produces the $p$-core. We remark that $p$-cores do not in general look like staircases, as in the $p=2$ case.

An application of this is the following formula for the characters of the representation corresponding to the partition $\lambda$ evaluated at elements with cycle type $(p,p,\dots,p)$: if $(\lambda^1,\lambda^2,\dots,\lambda^p)$ is the $p$-quotient of $\lambda$ and $\mu$ is its $p$-core, then
\[\left|\chi^\lambda(p,p,\dots,p)\right|=\left\{\begin{array}{ll}
0,&\mu\neq\varnothing,\\
\binom{|\lambda|/p}{|\lambda^1|,|\lambda^2|,\dots,|\lambda^p|}
\prod_i \dim\lambda^i,&\mu=\varnothing.
\end{array}\right.
\]
Here,
\[\binom{|\lambda|/p}{|\lambda^1|,|\lambda^2|,\dots,|\lambda^p|}=
\frac{(|\lambda|/p)!}{|\lambda^1|!\,|\lambda^2|!\cdots|\lambda^p|!}.
\]
The formula follows from the Muraghan-Nakayama rule and from the picture outlined above: adding a $p$-strip is equivalent to adding a single cell to one of the components of the $p$-quotient, so the factors $\dim\lambda^i$ appear. The multinomial factor appears because one has to choose which component of the $p$-quotient to affect with each added $p$-strip; in other words, that factor is there because the order matters.

\section{Vertex operators}
\label{sec:traces}
In this section we explore some of the ways known to work with the so-called vertex operators in the half-infinite wedge fermionic Fock space. These operators have found important applications in the field of enumerative combinatorics, and they most likely will continue to do so.

We have also included Section \ref{sec:theta} on theta functions and quasimodular forms because these objects tend to appear naturally in connection with the traces of vertex operators and we will need some of their properties in Section \ref{sec:probability}.

To keep things simple, we will consider only vertex operators of the form
\begin{equation}\label{eq:vertexopdefinition}
\Gamma_{-}\Gamma_{+}
\end{equation}
where
\[\Gamma_{\pm}=\exp\!\left(\sum_{\pm n>0} c_{n}\alpha_{n}\right)\!,\]
$c_n\in \C$,
acting on the space $\Lambda^{\frac\infty2}V$ defined in Section \ref{sec:fockspace}. Of course, this is just notation, and what we really mean is the expansion of the series of the exponential functions. Note that the order in equation \eqref{eq:vertexopdefinition} is not arbitrary: the operators that decrease charge must act first in order to avoid getting diverging sums. 

These operators are not necessarily bounded, but they are well-defined for all vectors in $\Lambda^{\frac\infty2}V$ and, for each such vector, they give a vector in a completion of $\Lambda^{\frac\infty2}V$ where we allow sums of infinitely many generators. In our applications the topology of this completion will not be important, so for simplicity let us be a little sloppy and ignore it.

In a sense, what a vertex operator does is to simultaneously consider all the possible products of characters of the symmetric group (compare equation \eqref{eq:murnakalphas}) with special weights given by the numbers $c_n$.
These vertex operators turn out to be quite common: they are appear naturally when one is dealing with an operator with some prescribed commutation relations, as the following lemma shows.

\begin{lem}[\protect{\cite[Lemma 14.5]{kac}}]
\label{lem:vertexcharacterization}
Let $D$ be a linear operator in $\Lambda^{\frac\infty2}$. Then:
\begin{enumerate}[a.]
\item if $[\alpha_{-n},D]=c_{-n}D$ for all $n=1,2,\dots$, then $D=c\Gamma_+$ for some $c\in \C$,
\item if $[\alpha_{n},D]=c_{n}D$ for all $n=1,2,\dots$, then $D=c\Gamma_-$ for some $c\in \C$, and
\item if $[\alpha_{n},D]=c_{n}D$ for all $n\in\Z\setminus\{0\}$, then $D=c\Gamma_-\Gamma_+$ for some $c\in \C$.
\end{enumerate}
\end{lem}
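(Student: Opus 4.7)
The plan is to first verify by direct computation that $\Gamma_\pm$ satisfy the stated commutation relations, and then argue that these relations pin down $D$ up to the scalar $c$. Parts (b) and (c) will follow from mild variants of the argument used for (a).

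\textbf{Step 1 (direct verification).} I would first record the identity $[A, e^{B}] = [A,B]\, e^{B}$, which holds whenever $[A,B]$ is central, i.e.\ commutes with $B$. Using the commutation relation \eqref{eq:commutator}, for $n>0$ one has $[\alpha_{-n}, \alpha_m] = -n\,\delta_{n,m}$, so $[\alpha_{-n}, \sum_{m>0} a_m \alpha_m] = -n a_n$ is a scalar. Applying the bracket--exponential identity gives $[\alpha_{-n}, \Gamma_+] = -n a_n\, \Gamma_+$, where $a_n$ denotes the coefficient of $\alpha_n$ in the exponent of $\Gamma_+$. Choosing $a_n = -c_{-n}/n$ therefore produces a $\Gamma_+$ satisfying $[\alpha_{-n}, \Gamma_+] = c_{-n}\, \Gamma_+$, as required. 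Part (b) is strictly analogous with $\alpha_{-n}\leftrightarrow\alpha_n$ and $\Gamma_+\leftrightarrow\Gamma_-$.

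\textbf{Step 2 (uniqueness, part (a)).} Given any $D$ satisfying the hypothesis of (a), I would consider $E := \Gamma_+^{-1} D$. Since $\Gamma_+$ is a convergent exponential of a nilpotent-in-each-weight operator, its inverse is $\exp\!\bigl(-\sum a_m \alpha_m\bigr)$, and the same computation as in Step 1 yields $[\alpha_{-n}, \Gamma_+^{-1}] = -c_{-n}\,\Gamma_+^{-1}$. Applying the Leibniz rule for commutators,
\[
[\alpha_{-n}, E] = [\alpha_{-n}, \Gamma_+^{-1}]\,D + \Gamma_+^{-1}\,[\alpha_{-n}, D] = -c_{-n}\,\Gamma_+^{-1}D + c_{-n}\,\Gamma_+^{-1}D = 0
\]
for every $n>0$. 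Hence $E$ commutes with every creation operator $\alpha_{-n}$. By the Murnaghan--Nakayama identity \eqref{eq:murnakalphas}, the vectors $\alpha_{-\mu_1}\cdots\alpha_{-\mu_k}v_\varnothing$ span the 0-charge sector, so $E$ is determined on $\Lambda_0^{\frac\infty2}$ by its value $E v_\varnothing$: concretely, $E(\alpha_{-\mu_1}\cdots\alpha_{-\mu_k} v_\varnothing) = \alpha_{-\mu_1}\cdots\alpha_{-\mu_k}\, E v_\varnothing$. Since $\Gamma_+$ fixes the vacuum (because $\alpha_m v_\varnothing = 0$ for $m>0$), we have $E v_\varnothing = \Gamma_+^{-1}Dv_\varnothing = Dv_\varnothing$, and the structural requirement that $D$ lies in the correct class of vertex operators amounts to $Dv_\varnothing \in \C v_\varnothing$, say $Dv_\varnothing = c\, v_\varnothing$. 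Then $E$ acts as the scalar $c$ on the whole 0-charge sector, and on the other charge sectors one repeats the argument with the corresponding ``sector vacuum'' obtained by translating the Maya diagram, so $E = c\cdot \mathrm{Id}$ and $D = c\,\Gamma_+$.

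\textbf{Step 3 (part (c) and the main obstacle).} For (c), I would set $E := \Gamma_-^{-1}\Gamma_+^{-1}D$ with coefficients chosen as in Step 1 for both factors; the same commutator calculation shows $E$ commutes with every $\alpha_n$, $n\neq 0$. Because the Heisenberg algebra generated by the $\alpha_n$ acts irreducibly on $\Lambda_0^{\frac\infty2}$ (this is the boson--fermion correspondence, reflected in the fact that the $\alpha_{-\mu}v_\varnothing$ form a basis and the $\alpha_n$, $n>0$, separate them via the commutation $[\alpha_n,\alpha_{-n}]=n$), a Schur-type lemma forces $E$ to be a scalar, giving $D=c\,\Gamma_-\Gamma_+$. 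The main obstacle lies in part (a) (and symmetrically (b)): the given commutation data only constrain how $D$ moves past creation operators, so uniqueness up to scalar genuinely requires the implicit hypothesis that $D$ preserves the vacuum line; once this is granted the argument is routine, whereas in (c) the stronger bilateral hypothesis makes this automatic via irreducibility.
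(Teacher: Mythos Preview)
The paper does not give its own proof of this lemma; it is stated with a citation to \cite[Lemma 14.5]{kac} and no argument follows. So there is nothing to compare your approach against at the level of the paper itself.

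That said, your approach is the standard one and is essentially how Kac proves it: verify the commutation relations for $\Gamma_\pm$ directly, divide through to obtain an operator $E$ commuting with the relevant $\alpha_n$, and invoke irreducibility of the Heisenberg action on each charge sector to force $E$ to be a scalar. Your Step~3 for part~(c) is correct and complete as written.

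You have also put your finger on a genuine issue with the \emph{statement} (not your proof): parts~(a) and~(b) as quoted here are not quite right without an extra hypothesis. An operator $E$ commuting with all $\alpha_{-n}$, $n>0$, is \emph{not} forced to be a scalar---for instance $E=\alpha_{-1}$ commutes with every $\alpha_{-n}$ but is certainly not scalar. Your workaround (assume $Dv_\varnothing\in\C v_\varnothing$) is exactly the missing hypothesis; in Kac's formulation the operator in question is always applied in a context where this is automatic (it is a coefficient of a generating series of $\psi$'s and $\psi^*$'s), and the paper's use of the lemma immediately after---to identify $\psi(xy)\psi^*(y)$---is of this type. So your diagnosis is correct: part~(c) is clean via Schur, while~(a) and~(b) need the vacuum condition you identified.
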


An important special case of a vertex operator is given by the following useful consequence of the so-called \emph{boson-fermion correspondence}. The motivation for the fancy name is rather lengthy and has already been explained very nicely for example in \cite[Chapter 5]{miwajimbodate}; the idea is essentially that the left-hand-side of equation \eqref{eq:bosongf} below is the generating function for a sequence of operators that have the commutation properties of bosons, while the operators $\alpha_n$ have the commutation properties of fermions. Examples of applications of formula \eqref{eq:bosongf} can be found in Sections \ref{sec:uniform} and \ref{sec:pillowcase}. Let
\[\psi(x)=\sum_{k\in\Z+\frac12}x^k\psi_k\quad\textrm{and}
\quad
\psi^*(x)=\sum_{k\in\Z+\frac12}x^{-k}\psi^*_k,
\]
where $\psi_k, \psi^*_k$ are the creation and annihilation operators defined in Section \ref{sec:fockspace}.
Then:
\begin{multline}\label{eq:bosongf}
\psi(xy)\psi^*(y)=\frac{1}{x^{1/2}-x^{-1/2}}\times\\
\exp\left(\sum_{n>0}\frac{(xy)^n-y^n}n\,\alpha_{-n}\right)
\exp\left(\sum_{n>0}\frac{y^{-n}-(xy)^{-n}}n\,\alpha_n\right).
\end{multline}
This identity can be checked easily using Lemma \ref{lem:vertexcharacterization}, for it is not a hard exercise to check the appropritate commutation relations of $D=\psi(xy)\psi^*(y)$ with the operators $\alpha_n$, and it can be checked that the coefficient $c$ turns out to be 
\[c=\frac1{x^{1/2}-x^{-1/2}}=\sum_{k=0}^\infty x^{-k-1/2}\]
by computing the inner product 
\begin{align*}
c&=\langle v_\varnothing,Dv_\varnothing\rangle=\langle v_\varnothing,\psi(xy)\textstyle\sum_{k\in \Z+\tfrac12}y^{-k}\psi_k^*v_\varnothing\rangle\\
&=\sum_{k=-\frac12,-\frac32,\dots}\langle v_\varnothing,\psi(xy)y^{-k}\psi_k^*v_\varnothing\rangle \\
&=\sum_{\ell\in\Z+\frac12}\sum_{k=-\frac12,-\frac32,\dots}\langle v_\varnothing,(xy)^\ell y^{-k}\psi_\ell\psi_k^*v_\varnothing\rangle\\
&=\sum_{k=-\frac12,-\frac32,\dots}\langle v_\varnothing,(xy)^k y^{-k}\psi_k\psi_k^*v_\varnothing\rangle=\sum_{k=-\frac12,-\frac32,\dots}x^k.
\end{align*}
 For the full proof of equation \eqref{eq:bosongf}, the reader can also see \cite[Theorem 14.10]{kac}.

\subsection{The value of a vertex operator}
\label{sec:vertexopvalue}
The matrix entries of $\Gamma_{\pm}$ can be found as follows \cite[Section 3.1]{pillow}, \cite[Section 2.2.4]{okounkovreshetikhin}.

Note that, from the definition of $\alpha_n$, we see that it can also be applied directly to the vectors $\underline k$ in the basis of $V$, $k\in\Z+\frac12$. Their action is
\[\alpha_n\underline{k}=\underline{k-n},\quad n\in\Z\]
and it follows that $\Gamma_+$ and $\Gamma_-$ also have an action on $\underline{k}$.

Fix a partition $\lambda$. Choose $N$ large enough to have $\lambda_{2N+1}=0$.
Then, when acting on the vectors $\underline{\lambda_i-i+\frac12}$, the operators $\Gamma_+$ and $\Gamma_-$ are lower and upper triangular, respectively, and their diagonal elements are all equal to 1. For this reason, the vectors $\underline{\lambda_i-i+\frac12}$ for $i>2N$ end up being irrelevant for the evaluation of $\langle\Gamma_-\Gamma_+v_\lambda,v_\mu\rangle$. On account of this, it is enough to study what $\Gamma_-\Gamma_+$ does in the exterior power of the finite-dimensional vector space generated by
\[e_k=\underline{-2N+k+\tfrac12},\quad k=0,1,\dots,\lambda_1+2N.\]
For convenience, we will use below the same notation $e_k$ for any $k\in\Z$, with a similar definition.

Consider first the action of the operator $\Gamma_+$, which is encoded in the following generating function
\begin{equation}\label{eq:generatingseriesgammaplus}
\sum_{k,\ell\geq0}x^ky^\ell\left(\Gamma_+e_k,e_\ell\right) =
\sum_{k,\ell}x^ky^\ell\left(\left[\prod_{n=1}^\infty \exp(c_n\alpha_n)\right]e_k,e_\ell\right)
\end{equation}
Since the action of each of these exponentials goes like
\[\exp(c_n\alpha_n)e_k=\sum_{j=0}^\infty \frac{c_n^j}{j!} \alpha_n^je_k=\sum_{j=1}^\infty \frac{c_n^j}{j!}e_{k-nj}\]
we see that, letting $m_i(\nu)$ be the number of parts of length $i$ in the partition $\nu$,
\[\left[\prod_n\exp(c_n\alpha_n)\right]e_k
=\sum_\nu \left(\prod_{i=1}^\infty \frac{c_i^{m_i(\nu)}}{m_i(\nu)!}\right)
e_{k-|\nu|},
\]
where the sum is taken over all partitions $\nu$. But notice also that
\[\sum_{|\nu|=n}\prod_{i=1}^\infty \frac{c_i^{m_i(\nu)}}{m_i(\nu)!}\]
is the coefficient of $z^n$ in the series expansion of
\[\phi^+(z):=\exp\sum_{k=1}^\infty c_kz^k.\]
So we can write the generating function \eqref{eq:generatingseriesgammaplus} in the form
\begin{align*}
\sum_{k,\ell}x^ky^\ell\left[z^{k-\ell}\right]\phi^+(z)=\sum_\ell x^\ell y^\ell\sum_k x^k\left[z^k\right]\phi^+(z)=\sum_\ell x^\ell y^\ell\phi^+(x)=\frac{\phi^+(x)}{1-xy}.
\end{align*}
Here, the notation $[z^m]$ stands for the operation of taking the coefficient of $z^m$ in the Taylor series expansion of the function to the right of this symbol.
Now let $\phi^+_n$ be the coefficients of the series expansion of $\phi^+(z)$, \[\phi^+(z)=\sum_{n=0}^\infty \phi^+_nz^n.\]
In particular, $\phi^+_0=1$.
Then
\[\left(\Gamma_+e_k,e_\ell\right) = [x^k y^\ell]\frac{\phi^+(x)}{1-xy} = \phi^+_{k-\ell} \]

We plug this into the full half-infinite wedge to conclude that, letting
\[s_{\lambda/\mu}^+=\det\left(\phi^+_{\lambda_i-\mu_j+j-i}\right),\]
we have
\[\langle \Gamma_+v_\mu,v_\lambda\rangle=s^+_{\lambda/\mu}.\]
Here, $s_{\lambda/\mu}^+$ is the skew Schur function $s_{\lambda/\mu}$ (defined in Section \ref{sec:normalschurskewschur}) specialized to the case in which the homogeneous symmetric functions $h_n$ equal $\phi_n^+$ for $n\geq0$ or, equivalently, specialized to the case in which the symmetric power functions $p_n$ equal $nc_n$ (compare with the relations \eqref{eq:skewschurh}). In particular, $s_{\lambda/\mu}=0$ if the Young diagram of $\mu$ is not completely contained in the Young diagram of $\lambda$.

A similar calculation shows that
\[\langle\Gamma_-v_\mu,v_\lambda\rangle=s^-_{\mu/\lambda}\]
where
\[s^-_{\mu/\lambda}=\det\left(\phi^-_{\mu_i-\lambda_j+j-i}\right),\]
and $\phi^-_n$ are the coefficients in the series expansion of \[\phi^-(z)=\exp\sum_{n=1}^\infty c_{-n}\alpha_{-n}.\]

With this information in hand, it is immediate that
\[\boxed{\left\langle\Gamma_-\Gamma_+v_\mu,v_\lambda\right\rangle= \sum_{\nu} s_{\lambda/\nu}^- s_{\mu/\nu}^+.}\]

As a side note, whenever one can write $\phi(z)=\phi^+(z)\phi^-(z)$, where $\phi^+$ and $\phi^-$ are analytic and non-vanishing in some neighborhood of, respectively, the interior and the exterior of the unit disc, this expression is called the \emph{Wiener-Hopf factorization} of the function $\phi$.
Clearly, this type of factorization may be important when we want to understand vertex operators. (However, the analyticity of $\phi^+$ and $\phi^-$ need not be true in general.) See \cite{okounkovreshetikhin} for an application to the enumeration of random surfaces.

\subsection{Formulas for the computation of the trace of a vertex operator}
\label{sec:traceformulas}
Sometimes one needs to compute the trace of a vertex operator $T$. In such cases, there are two formulas that are quite useful.

The first one is the obvious decomposition of the space
\[\Lambda^{\frac\infty2}_0V\cong\bigotimes_{n=1}^\infty\bigoplus_{k=0}^\infty
\alpha_{-n}^kv_\varnothing,\]
whose immediate consequence is that
\begin{equation}\label{eq:tracetoprod}
\boxed{\trace T=\prod_{n=1}^\infty\trace \left(\left.T\right|_{\bigoplus_{k=0}^\infty\alpha_{-n}^k v_\varnothing}\right).}
\end{equation}
This, together with the commutation relation \eqref{eq:commutator}, allows for the following formula \cite{pillow} to be used: for complex numbers $A$ and $B$,
\begin{equation}\label{eq:specialtrace}
\boxed{\left.\trace q^He^{A\alpha_{-n}}e^{B\alpha_n}\right|_{\bigoplus_{k=0}^\infty\alpha_{-n}^k v_\varnothing}=\frac{1}{1-q^n}\exp\frac{nABq^n}{1-q^n}.}
\end{equation}
Here, $H(v_\lambda)=|\lambda|v_\lambda$ as above, $q$ is a parameter, and the operator $q^H$ is defined by
\[q^Hv_\lambda=|\lambda|\,v_\lambda,\]
and extended by linearity.
The introduction of $q^H$ is often a good idea because it sometimes simplifies formulas and it produces generating functions that are indexed on the size of the partitions.

To prove \eqref{eq:specialtrace}, one needs to expand both sides. Let us explain this in detail. We have
\begin{align*}
\trace q^He^{A\alpha_{-n}}&\left.e^{B\alpha_n}
\right|_{\bigoplus_{k=0}^\infty\alpha_{-n}^k v_\varnothing}=\sum_{k=0}^\infty\left\langle q^He^{A\alpha_{-n}}e^{B\alpha_n} \alpha_{-n}^k v_\varnothing,\alpha^k_{-n}v_\varnothing\right\rangle\\
&=\sum_k\left\langle q^H\sum_\ell\frac{(A\alpha_{-n})^\ell} {\ell!}\sum_m\frac{(B\alpha_n)^m}{m!}
\alpha_{-n}^kv_\varnothing,\alpha_{-n}^kv_\varnothing\right\rangle\\
&=\sum_{k,\ell,m}q^{n(\ell-m+k)}\frac{A^\ell B^m}{\ell!\,m!}
\left\langle\alpha^\ell_{-n}\alpha^m_{n}\alpha^k_{-n}v_\varnothing, v\alpha^k_{-n}v_\varnothing\right\rangle
\end{align*}
Note that this inner product vanishes when the partitions associated to the vectors involved do not have the same number of cells.  This means that only the terms in which $m=\ell$ survive and we get
\[\sum_{k,\ell}q^{kn}\frac{(AB)^\ell}{\ell!^2}\left\langle\alpha^\ell_{-n}\alpha^\ell_{n}
\alpha^k_{-n}v_\varnothing,\alpha^k_{-n}v_\varnothing\right\rangle.\]
One then plays a game of moving the fermionic annihilators $\alpha_n$ to the right, using the commutation relation \eqref{eq:commutator} and the fact that $\alpha_nv_\varnothing=0$. For example,
\begin{multline*}
\alpha_n\alpha_{-n}\alpha_{-n} v_\varnothing=\alpha_{-n}\alpha_{n}\alpha_{-n} v_\varnothing+n\alpha_{-n}v_\varnothing=\\
\alpha_{-n}\alpha_{-n}\alpha_{n} v_\varnothing+2n\alpha_{-n}v_\varnothing=2n\alpha_{-n}v_\varnothing.
\end{multline*}
In general, one gets
\[\alpha_n^a\alpha_{-n}^bv_\varnothing=\left\{\begin{array}{ll}
\tfrac{b!}{(b-a)!}n^a\alpha_{-n}^{b-a}v_\varnothing&\textrm{if $a\leq b$},\\
0& \textrm{otherwise.}
\end{array}\right.\]
Therefore, the above sum becomes
\[\sum_{k=0}^\infty\sum_{\ell=0}^k q^{kn}\frac{(nAB)^\ell}{\ell!}\binom{k}{\ell}.\]
This equals the right hand side of \eqref{eq:specialtrace}; just invert the order of summation and notice that
\[\sum_{k=\ell}^\infty q^{kn}\binom{k}{\ell}=\frac{q^{n\ell}}{(1-q^n)^{\ell+1} }.\]

\subsection{Theta functions and quasimodular forms}
\label{sec:theta}

It is an experimental fact that, due to the form of formulas like \eqref{eq:bosongf} or \eqref{eq:specialtrace}, traces of vertex operators have a tendency to be expressible in terms of the Jacobi theta function. For examples, please see Section \ref{sec:probability} or, in a more general setting, \cite{kac}.

\subsubsection{The Jacobi theta function}

The \emph{Jacobi theta function}
\begin{equation}\label{eq:jacobithetadef}
\vartheta(x,q)=(q^{1/2}-q^{-1/2})\prod_{i=1}^\infty\frac{(1-q^ix)(1-q^i/x)}{(1-q^i)^2},
\end{equation}
is defined for arbitrary $x$ in $\C$ and $q$ in the unit disc $\{|q|<1\}\subset\C$. In this section we recall some of its properties.

The first property is its modularity. Let $\tau$ and $z$ be defined (modulo $\Z\subset\C$) by $q=e^{2\pi i \tau}$ and $x=e^{2\pi i z}$. It is traditional to abuse notation and write $\vartheta(z,\tau)$ for $\vartheta(x,q)$, and we will embrace this tradition here. As a consequence of the Poisson summation formula, $\vartheta$ satisfies
\[\vartheta\!\left(\frac z\tau,-\frac1\tau\right)=-\frac{e^{\pi i z^2/\tau}}{i\tau}\,\vartheta(z,\tau).\]

Another property is its additive expression, which can be derived using the \emph{Jacobi triple product}, which is the identity
\[\prod_{m=1}^\infty \left(1-x^{2m}\right)\left(1+x^{2m-1}y^2\right)\left(1+x^{2m-1}y^{-2}\right)=\sum_{n\in \Z}x^{n^2}y^{2n},\]
valid for all complex numbers $x$ and $y$ with $|x|<1$ and $y\neq0$. It follows that
\[\vartheta(x,q)=\eta^{-3}(q)\sum_{n\in \Z}(-1)^nq^{\frac{\left(n+\frac12\right)^2}{2}}x^{n+\frac12},\]
where $\eta$ stands for the \emph{Dedekind eta function},
\[\eta(q)=q^{1/24}\prod_{n=1}^\infty(1-q^n).\]

A second property we want to record for later reference is the expression of our $\vartheta$ in terms of the more classical Jacobi thetas, namely, \cite[page 17]{mumford}
\begin{align*}
\vartheta_{00}(z,\tau)&=\sum_{n\in\Z}\exp(\pi i n^2\tau+2\pi i n z) \\
\vartheta_{01}(z,\tau)&=\sum_{n\in\Z}\exp\!\left(\pi i n^2\tau+2\pi i n\left(z+\tfrac12\right)\right)\\
 &=\vartheta_{00}\!\left(z+\tfrac12,\tau\right)\\
 \vartheta_{10}(z,\tau)&=\sum_{n\in\Z}\exp\left(\pi i \left(n+\tfrac12\right)^2\tau+2\pi i \left(n+\tfrac12\right)z\right)\\
 &=\exp\!\left(\frac{\pi i \tau}4+\pi iz\right)\vartheta_{00}\!\left(z+\tfrac12 \tau,\tau\right)\\
 \vartheta_{11}(z,\tau)&=\sum_{n\in\Z}\exp\!\left(\pi i\left(n+\tfrac12\right)^2\tau+2\pi i\left(n+\tfrac12\right)\left(z+\tfrac12\right)\right) \\
 &=\exp\!\left(\frac{\pi i \tau}4+\pi i\left(z+\tfrac12\right)\right)\vartheta_{00}\!\left(z+\tfrac12(1+\tau),\tau\right)
\end{align*}
It follows that
\begin{equation}\label{eq:classictheta}
\vartheta(z,\tau)=-i\eta^{-3}(\tau)\,\vartheta_{11}(z,\tau).
\end{equation}
We further record the modular behavior of the traditional Jacobi thetas \cite[page 36]{mumford}:
\begin{align*}
\vartheta_{00}\!\left(\frac z\tau,-\frac1\tau\right)&=(-i\tau)^{\frac12}\exp\!\left(\frac{\pi i z^2}\tau\right)\vartheta_{00}(z,\tau), \\
\vartheta_{01}\!\left(\frac z\tau,-\frac1\tau\right)&=(-i\tau)^{\frac12}\exp\!\left(\frac{\pi i z^2}\tau\right)\vartheta_{10}(z,\tau), \\
\vartheta_{10}\!\left(\frac z\tau,-\frac1\tau\right)&=(-i\tau)^{\frac12}\exp\!\left(\frac{\pi i z^2}\tau\right)\vartheta_{01}(z,\tau), \\
\vartheta_{11}\!\left(\frac z\tau,-\frac1\tau\right)&=-(-i\tau)^{\frac12}\exp\!\left(\frac{\pi i z^2}\tau\right)\vartheta_{11}(z,\tau).
\end{align*}
Note the additional minus sign in the rule for $\vartheta_{11}$. These identities are useful because it is likely to find quotients of the form $\vartheta(x)/\vartheta(-x)$ involved in partition functions.

The statements of the following two lemmas are more readable if we use $h$ instead of $\tau$, where
\[q=e^{-h}=e^{2\pi i\tau}.\]

\begin{lem}[\protect{\cite[Proposition 4.1]{branchedcoverings}}]\label{lem:theta1}
We have
\[\frac{\vartheta(e^{u},e^{-h})}{\vartheta'(0,e^{-h})}\approx
h\frac{\sin(\pi u/h)}{\pi}\exp\left(\frac{u^2}{2h}\right)\]
as $h\to+0$, uniformly in $u$. This asymptotic relation can be differentiated any number of times.
\end{lem}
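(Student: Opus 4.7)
The plan is to apply the modular transformation of $\vartheta$ stated in the excerpt in order to trade the hard regime $h\to0^+$ (where $q=e^{-h}\to1$) for the easy regime in which the transformed nome tends to $0$, so that the additive form of $\vartheta$ is dominated by its first two terms.

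Concretely, I would set $\tau=ih/(2\pi)$ and $z=-iu/(2\pi)$, so that $q=e^{2\pi i\tau}=e^{-h}$ and $x=e^{2\pi iz}=e^u$. A direct substitution yields $z/\tau=-u/h$, $-1/\tau=2\pi i/h$, and $\pi iz^2/\tau=-u^2/(2h)$, so the transformation law produces
\[
\vartheta(e^u,e^{-h})\;=\;\frac{h}{2\pi}\,e^{u^2/(2h)}\,\vartheta\!\left(e^{-2\pi iu/h},\,e^{-4\pi^2/h}\right).
\]
Writing $q'=e^{-4\pi^2/h}$, I would then insert the additive expansion $\vartheta(x,q)=\eta^{-3}(q)\sum_{n}(-1)^{n}q^{(n+1/2)^2/2}x^{n+1/2}$ on the right. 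Since $q'^{(n+1/2)^2/2}=e^{-2\pi^2(n+1/2)^2/h}$, only the terms $n=0,-1$ contribute at leading order, and they combine to $-2i\,e^{-\pi^2/(2h)}\sin(\pi u/h)$. Together with $\eta^{-3}(q')=e^{\pi^2/(2h)}(1+O(q'))$ this gives $\vartheta(e^{-2\pi iu/h},q')=-2i\sin(\pi u/h)+O(e^{-4\pi^2/h})$ uniformly in real $u$. Differentiating at $u=0$, where the Gaussian is flat, yields $\vartheta'(0,e^{-h})=-i+O(e^{-4\pi^2/h})$, and taking the ratio produces the boxed main term with a multiplicative $O(e^{-c/h})$ error.

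For the clause that the asymptotic can be differentiated any number of times, I would promote $u$ to a complex variable in a fixed horizontal strip $\{|\imagp u|\le R\}$. Because $x'=e^{-2\pi iu/h}$ stays in a bounded annulus on this strip, the series defining $\vartheta(x',q')$ converges absolutely and uniformly there, so the $O(e^{-c/h})$ bound on the ratio is uniform in $u$ on the strip. Cauchy's integral formula on a small circle around each point then upgrades this uniform estimate to a uniform $O(e^{-c/h})$ estimate on every $u$-derivative, which is exactly the differentiability claim.

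The main obstacle I anticipate is pure bookkeeping: one must verify that the factor $1/(i\tau)$ from the modular transformation, the Gaussian $e^{\pi iz^2/\tau}$, the two leading terms of the additive series, and the normalization by $\vartheta'(0,e^{-h})$ conspire into precisely $h\sin(\pi u/h)/\pi$, with no stray sign or power of $2\pi$. The normalization is essential, since it is what absorbs the $h$-independent constant $-i$ coming from $\frac{h}{2\pi}\cdot(-2i\pi/h)$ and produces the clean $1/\pi$ in the final formula.
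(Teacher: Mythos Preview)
Your argument is the same as the paper's: apply the modular transformation, expand the additive series on the transformed side, and observe that only the terms $n=0,-1$ survive as $h\to 0^+$. The paper's proof is precisely these three sentences.

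There is one slip in your differentiability argument. You claim that on a strip $|\imagp u|\le R$ the point $x'=e^{-2\pi i u/h}$ stays in a bounded annulus. It does not: $|x'|=e^{2\pi\,\imagp(u)/h}$, which blows up (or goes to zero) as $h\to 0^+$ whenever $\imagp u\neq 0$. What actually saves you is not boundedness of $x'$ but the Gaussian weights: the general term has modulus
\[
\exp\!\Big(\tfrac{1}{h}\big[-2\pi^2(n+\tfrac12)^2+2\pi(n+\tfrac12)\imagp u\big]\Big),
\]
so for $|n+\tfrac12|\ge \tfrac32$ the quadratic part still dominates the linear one provided $R$ is chosen smaller than some fixed multiple of $\pi$ (e.g.\ $R<\pi$ suffices). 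With that correction the tail is $O(e^{-c/h})$ uniformly on the strip, and your Cauchy-estimate step goes through. Alternatively, you can differentiate the series termwise in the real variable $u$ directly: each derivative brings down a polynomial factor in $(2n+1)/h$, which is again beaten by the Gaussian, so the $n=0,-1$ dominance persists after any fixed number of derivatives.
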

\begin{proof}
We have
\[\vartheta(e^{hu},e^{-h})=i\sqrt{\frac{2\pi}{h}}\exp\left(\frac{hu^2}{2}\right)
\vartheta\left(e^{-2\pi i u},e^{-\frac{4\pi^2}{h}}\right),\]
so, expanding the series of $\vartheta$,
\[\vartheta\left(e^{-2\pi i u},e^{-\frac{4\pi^2}{h}}\right)=\sum_{n\in\Z}(-1)^n\exp
\left(-\frac{2\pi^2\left(n+\frac12\right)^2}h\right)e^{-2\pi\left(n+\frac12\right)u}.\]
From here it is clear that, as $h\to+0$, the terms $n=0$ and $n=-1$ dominate all others.
\end{proof}

\begin{lem}\label{lem:theta2}
We have the following approximations for $\vartheta$ at $x=1$ and $x=-1$, respectively:
\[
\begin{array}{c}
\vartheta(e^u,e^{-h})
\approx i\eta^{-3}(q)\sqrt{\frac{2\pi}h}\exp\left(\pi i \left(\left[\frac{u}{2\pi i}-\tfrac12\right]+\tfrac12\right)-\frac{2\pi^2}h\left\{\frac{u}{2\pi i}-\tfrac12\right\}^2\right),
\\
\vartheta(-e^u,e^{-h})
\approx i\eta^{-3}(q)\sqrt{\frac{2\pi}h}\exp\left(-\frac{2\pi^2}h\left\{\frac{u}{2\pi i}-\tfrac12\right\}^2\right),
\end{array}
\]
as $h\to0$, where $q=e^{-h}$, $[x]$ stands for the integer closest to $x$,  and $\{x\}=x-[x]$.
\end{lem}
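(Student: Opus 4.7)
The approach mirrors that of Lemma~\ref{lem:theta1}: apply the modular transformation of $\vartheta$ to convert the slowly-convergent series at $q=e^{-h}$ (where $q\to 1^-$ as $h\to 0^+$) into a rapidly-convergent series at $q'=e^{-4\pi^2/h}$ (where $q'\to 0$), and then extract the dominant terms.

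Concretely, starting from the modular transformations of $\vartheta_{11}$ and $\eta$ recorded earlier, together with the identity $\vartheta(z,\tau)=-i\eta^{-3}(\tau)\vartheta_{11}(z,\tau)$, one derives an identity of the form
\[\vartheta(e^u,e^{-h})=c(h)\,e^{u^2/(2h)}\,\vartheta\!\left(e^{-2\pi i u/h},e^{-4\pi^2/h}\right),\]
with an explicit prefactor $c(h)$; this is the analogue, for argument $e^u$ rather than $e^{hu}$, of the formula used in the proof of Lemma~\ref{lem:theta1}. I would then plug in the series
\[\vartheta(x',q')=\eta^{-3}(q')\sum_{n\in\Z}(-1)^n(q')^{(n+\frac12)^2/2}(x')^{n+\frac12}\]
with $x'=e^{-2\pi i u/h}$, $q'=e^{-4\pi^2/h}$, and complete the square in the exponent: setting $a=u/(2\pi i)-\tfrac12$, the $n$-th exponent becomes $-(2\pi^2/h)(n-a)^2-u^2/(2h)$. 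The Gaussian $e^{-u^2/(2h)}$ cancels the $e^{u^2/(2h)}$ from the modular transformation, and $\eta^{-3}(q')=(2\pi/h)^{3/2}\eta^{-3}(q)$ by the $\eta$-transformation. Keeping only the dominant integer $n=[a]$ gives
\[\sum_n(-1)^n e^{-(2\pi^2/h)(n-a)^2}\approx (-1)^{[a]}\,e^{-(2\pi^2/h)\{a\}^2},\]
and rewriting $(-1)^{[a]}=e^{\pi i[a]}$ and collecting the remaining factor of $i$ into $e^{\pi i/2}$ yields the first stated approximation.

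For $\vartheta(-e^u,e^{-h})$ the same machinery applies with $z=\tfrac12+u/(2\pi i)$. The extra $1/2$ modifies both the Gaussian in the modular transformation (introducing a factor $e^{\pi^2/(2h)-\pi i u/h}$) and the argument of the transformed $\vartheta$ (shifting $x'$ by $e^{2\pi^2/h}$); after expanding and completing the square, the two sources of extra exponentials cancel exactly, leaving an effective Gaussian centered at $b=u/(2\pi i)$ (rather than $b-\tfrac12=a$) together with a reshuffled phase that collapses into the form stated in the lemma.

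The main obstacle is purely computational: a sizable collection of multiplicative factors---powers of $\sqrt{2\pi/h}$, Gaussians $e^{\pm u^2/(2h)}$ and $e^{\pm\pi^2/(2h)}$, factors of $i$, and the sign $(-1)^{[\cdot]}$ from the dominant term---must cancel in exactly the right way. A secondary subtlety, to be noted but handled by convention, is the degenerate case $\{a\}=\pm\tfrac12$, in which two Gaussian terms have equal magnitude and the leading-order approximation should be understood as retaining both contributions.
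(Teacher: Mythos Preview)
Your approach matches the paper's: both arguments consist of applying the modular transformation $\tau\mapsto-1/\tau$ in the representation $\vartheta=-i\eta^{-3}\vartheta_{11}$ and then reading off the dominant Gaussian term from the resulting rapidly convergent series, exactly as in Lemma~\ref{lem:theta1}.

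For the first formula there is no difference at all. For the second, the paper takes a marginally tidier route. Rather than applying the $\vartheta_{11}$-modular law at the shifted point $z=\tfrac12+u/(2\pi i)$ and carrying the cross-terms $e^{\pi^2/(2h)-\pi i u/h}$ through the calculation as you describe, it first invokes the half-period identity $\vartheta_{11}(-e^u,q)=-\vartheta_{10}(e^u,q)$ and then applies the modular transformation for $\vartheta_{10}$, which swaps it with $\vartheta_{01}$. Since $\vartheta_{01}$ is a sum over integers $n$ rather than half-integers $n+\tfrac12$, no shift has to be undone afterwards and the phase falls out directly without the exponential cancellations you outline. Both routes are equivalent and lead to the same asymptotics; the paper's version simply has less bookkeeping.
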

\begin{proof}[Proof of Lemma \ref{lem:theta2}]
This is again a straightforward application of the modular transformation $h\mapsto -1/h$ in the expression \eqref{eq:classictheta}. We also use the identity \[\vartheta_{11}(-e^{u},q)=-\vartheta_{10}(e^u,q).\qedhere\]
\end{proof}

\subsubsection{Quasimodular forms}
\label{sec:quasimodular}

We follow \cite[Section 3.3]{pillow}.

A \emph{quasimodular form} for a subgroup $\Gamma\subset \SL_2(\Z)$ is the holomorphic part of an almost holomorphic modular form for $\Gamma$. A function of $|q|<1$ is called \emph{almost holomorphic} if it is a polynomial in $(\log|q|)^{-1}$ with coefficients that are holomorphic functions of $q$.

Quasimodular forms constitute a graded algebra $\mathcal{QM}(\Gamma)$. It is a theorem of M. Kaneko and D. Zagier \cite{kanekozagier} that the space of quasimodular forms is simply
\[\mathcal{QM}(\Gamma)=\Q[E_2]\otimes\mathcal M(\Gamma),\]
where $\mathcal M(\Gamma)$ denotes the space of modular forms with respect to the subgroup $\Gamma$ and $E_2$ denotes the first of the Eisenstein series, which are in general defined by
\begin{equation*}
E_{2k}(q)=\frac{\zeta(1-2k)}2+\sum_{n=1}^\infty\left(\sum_{d|n}d^{2k-1}\right)q^n,\quad k=1,2,\dots
\end{equation*}
In particular,
\begin{equation}\label{eq:certaineisensteinseries}
E_2(q),E_2(q^2),E_4(q^2)\in\mathcal {QM}(\Gamma_0(4))
\end{equation}
where
\[\Gamma_0(4)=\left\{\bigl(\begin{smallmatrix}a&b\\c&d\end{smallmatrix}\bigr):c\equiv0\!\!\mod 4\right\}.\]
It is well known that the three series in \eqref{eq:certaineisensteinseries} algebraically generate all other Eisenstein series and their products, so these too are quasimodular forms. Hence, in order to show that a certain function is a quasimodular form, it  is sufficient to show that it is a polynomial in the $E_{2k}$.

Note the close connection between the Eisenstein series and the theta function, best evidenced by the formal identities (easily checked by expanding both sides)
\begin{align*}
\log\frac z{\vartheta(e^z)}&=2\sum_{k\geq1}\frac{z^{2k}}{(2k)!}E_{2k}(q), \\
\log\frac{\vartheta(-e^{z})}{\vartheta(-1)}&=2\sum_{k\geq1}\frac{z^{2k}}{(2k)!}
\left[E_{2k}(q)-2^{2k}E_{2k}(q^2)\right],\\
\vartheta(-1)&=2i\left(\prod_{n\geq1}\frac{1+q^n}{1-q^n}\right)^2
=\frac{\eta(q^2)^2}{\eta(q)^4},
\end{align*}
where $\eta$ is the Dedekind eta function.

\section{Variational methods}
\label{sec:variational}
\subsection{The entropy}
\label{sec:entropy}
When thinking about very large partitions, as we will see later, one often finds that most of the partitions in a particular distribution end up looking very much alike. In this case, they are said to have a \emph{limit shape}.

The method to deduce what this limit shape is (if it exists) must vary according to the probability distribution being studied. Probably the easiest case comes from the case of the uniform distribution. For each number $N$, the uniform distribution gives equal probability weight to each of the partitions of $N$. For large $N$, most partitions look like a certain curve, which we shall now deduce.

After rotating the Young diagrams by  $135^\circ$ as in Section \ref{sec:fockspace}, each partition contour becomes a zig-zag curve, which can be interpreted as the graph of a discrete random walk that moves up or down, with slope $+1$ or $-1$, from left to right. For any two points in
\[\{(x,y)\in\Z\times \Z:y\geq|x|\},\]
not both on the same vertical line, there is at least one partition contour that joins them. If the two points are $m>0$ units apart horizontally and $n\geq0$ units apart vertically, there are, in fact
\[\binom{m}{(m+n)/2}\]
possible random walks joining them, each of them determined by the choice of $(m+n)/2$ steps upwards, for the remaining $(m-n)/2$ steps will be taken downwards. (The plot of the random walk is part of a partition's contour.) As we rescale maintaining the overall slope $p=(m+n)/2m$ constant, we find that an application of Stirling's formula gives
\[\frac1m\log\binom{m}{pm}\to S(p)\]
where
\[S(p)=-p\log p-(1-p)\log(1-p)\]
is the \emph{Shannon entropy}.

Let us assume that there exists a limit shape, given by the function $f$.
The parameter $p$, which can be interpreted as the probability of the random walk of going up, then corresponds to $(1+f'(x))/2$. For instance, if $f'(x)=1$, then at the microscopic level the random walk must be going always upwards, so $p=1$. And if $f'(x)=-1$, the random walk is never going up, so it is always going down and $p=0$.

From our calculation above it follows that a heuristic count of partitions (or random walks) around the graph of $f$ is given by
\begin{equation}\label{eq:entropyintegral}
\frac1\delta\int_0^T S\left(p\right)dx=
\frac1\delta\int_0^T S\left(\frac{1+f'(x)}2\right)dx,
\end{equation}
where $\delta$ is the size of the side of the rescaled cells of the Young diagrams. The limit shape would be the curve $f$ maximizing this count, because it would need to have as many of these zigzagging random walks near to it as possible (it must be the look-alike of most large partitions). If one variationally looks for a function $f$ that will maximize this integral, while respecting the restriction that $\int f(x)-|x|\,dx=1$, one ends up finding the correct limit shape for the uniform distribution; see Section \ref{sec:uniform}.

This is really a textbook case, and it can be studied to greater depth. In particular, it is not difficult to find a full large deviation principle. See for example \cite{okounkovlimitshapes}.

\subsection{Approximating products of hooks}
\label{sec:hooksvariational}
In order to deduce the asymptotic behavior of probability distributions in the space of Young diagrams given by products or quotients of hooks, people sometimes use variational arguments resembling the idea that S. Kerov and A. Vershik \cite{vershikkerov1985} and B. Logan and L. Shepp \cite{loganshepp} used in the analysis of the Plancherel distribution. Here, we review this technique.

The idea is that we can approximate the logarithm of products of hook lengths of very large partitions by an integral.

For a non-increasing function $F:\R_+\to\R_+$, define
\[F^{-1}(y)=\inf\{x:F(x)\leq y\},\]
and
\[h_F(x,y)=F(x)+F^{-1}(y)-x-y,\]
which gives an approximation of the hook length at $(x,y)$.
Let $\lambda$ be a partition, and $n=|\lambda|$. We contract its diagram until it has area 1, rescaling by $1/\sqrt {n}$, and we associate to it a function $F(x)$ that describes its rim,
\[F(x)=\frac 1{\sqrt n}\#\{\textrm{parts of $\lambda$ of size $\leq\lceil \sqrt n\,x \rceil$}\}.\]
In the following diagram, we have the example of $\lambda=(4,3,3,2)$.
\begin{center}
\includegraphics{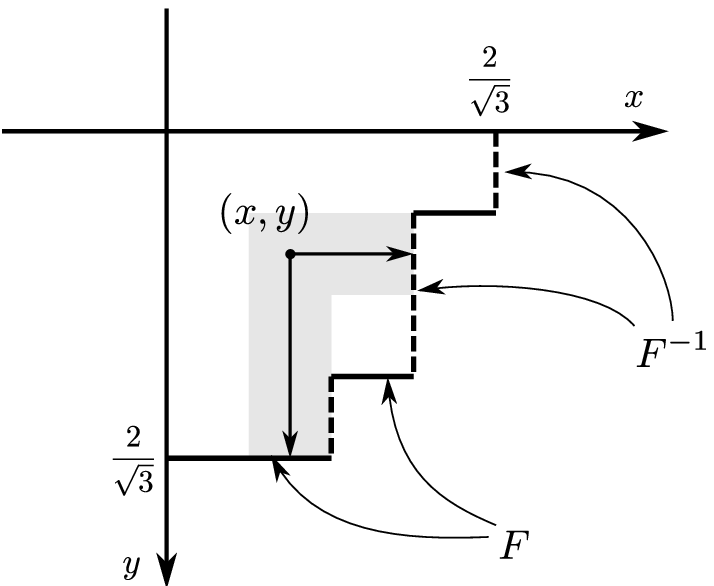}
\end{center}
In the picture, we have inverted the orientation of the $y$ axis to reflect the usual orientation of our Young diagrams, and we have drawn the parts of the rim of $\lambda$ that are described by $F$ with a solid line, while the parts that correspond do $F^{-1}$ are drawn with dashed lines. The numbers on the axes reflect the rescaling by $1/\sqrt{n}=1/\sqrt{12}$. We have also highlighted the hook corresponding to the cell $(2,2)$. We will approximate the logarithm of its length by
\begin{equation}\label{eq:firsthookintegral}
n\int_R \log \left(\sqrt{n} \,h_F(x,y)\right)\,dx\,dy,
\end{equation}
where $R$ is the rectangle between the points $\frac1{\sqrt{12}}(1,1)$ and $\frac1{\sqrt{12}}(2,2)$. To motivate this approximation, we have drawn, at the center of the cell $(2,2)$, the point $(x,y)$ and an inverted L ending extending to the rescaled rim of $\lambda$; the length of this L is precisely $h_F(x,y)$: its vertical part measures $F(x)-y$, while its horizontal part measures $F^{-1}(y)-x$. Then $\sqrt n\,h_F(x,y)=4$ coincides with the length of the hook of cell $(2,2)$, and the integral \eqref{eq:firsthookintegral} is very close to this number as well. In general, we have

\begin{lem}\label{lem:hookapproximation}
Let $\lambda$ be a partition and let $\square\in\lambda$ be a cell in its Young diagram, and denote by $F$ the non-increasing function that describes the rim of the Young diagram of $\lambda$, as defined above. Let $R_\square\subseteq\{(x,y):0\leq y\leq F(x)\}$ be the rectangular domain corresponding to $\square$. Let $h_\square$ denote the hook length of $\square$. Then
\[\log h_\square=n\int_{R_\square}\log\left(\sqrt n h_F(x,y)\right)dx\,dy + c\!\left(h_\square\right),\]
where
\[c(x)=\frac12\sum_{k=1}^\infty\frac{1}{k(k+1)(2k+1)x^{2k}}.\]
\end{lem}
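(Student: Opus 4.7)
The plan is to reduce the identity to a purely numerical equation in the single parameter $h = h_\square \in \{1,2,\dots\}$ and then to verify that equation by expanding a logarithm and integrating term by term.

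First I would observe that both $F$ and $F^{-1}$ are step functions, constant on intervals of length $1/\sqrt n$. If $\square$ sits in column $i$ and row $j$ of $\lambda$, then $R_\square = ((i-1)/\sqrt n, i/\sqrt n] \times ((j-1)/\sqrt n, j/\sqrt n]$, and a direct reading of the definitions gives $F(x) = \lambda'_i/\sqrt n$ and $F^{-1}(y) = \lambda_j/\sqrt n$ throughout the interior of $R_\square$. Hence $\sqrt n\, h_F(x,y) = \lambda'_i + \lambda_j - \sqrt n(x+y)$. Combining this with the classical hook-length formula $h_\square = \lambda_j + \lambda'_i - i - j + 1$ and the affine change of coordinates $u = \sqrt n\,x - (i-1)$, $v = \sqrt n\,y - (j-1)$, the integral on the right-hand side of the claim collapses to
\[
I(h) \;:=\; \int_0^1\!\int_0^1 \log(h + 1 - u - v)\,du\,dv, \qquad h = h_\square.
\]
Thus it suffices to prove $I(h) = \log h - c(h)$ for every integer $h \geq 1$.

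Next I would evaluate $I(h)$ in closed form. Setting $\tau = u + v - 1$ and integrating out the orthogonal variable, the induced density of $u+v-1$ is the tent function $1 - |\tau|$ on $[-1,1]$; folding the $\tau < 0$ half via $\tau \mapsto -\tau$ converts the integrand to $\log(h-\tau) + \log(h+\tau) = \log(h^2-\tau^2)$, so that
\[
I(h) \;=\; \int_0^1 (1-\tau)\log(h^2 - \tau^2)\,d\tau.
\]

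Finally I would expand $\log(h^2 - \tau^2) = 2\log h + \log(1 - \tau^2/h^2) = 2\log h - \sum_{k\geq 1} \tau^{2k}/(k\,h^{2k})$ and integrate term by term, using $\int_0^1 (1-\tau)\,d\tau = \tfrac12$ and $\int_0^1 (1-\tau)\tau^{2k}\,d\tau = 1/[(2k+1)(2k+2)] = 1/[2(k+1)(2k+1)]$. The resulting series is exactly $c(h)$, which yields $I(h) = \log h - c(h)$ and, upon rearrangement, the stated identity. Term-by-term integration is legitimate because the series converges uniformly on $[0,1]$ for $h > 1$, while the borderline case $h = 1$ is handled by dominated convergence against $(1-\tau)\,|\log(1-\tau^2)|$. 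The only step that calls for genuine care is the first one: one must correctly match the integer indexing of the chosen cell to the step-function values of $F$ and $F^{-1}$ and produce the precise arithmetic identity $\sqrt n\, h_F(x,y) = h_\square + 1 - u - v$ (rather than an off-by-one variant); once that bookkeeping is in place, the remaining calculation is routine.
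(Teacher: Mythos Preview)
Your proposal is correct and follows essentially the same route as the paper: both reduce the double integral to a unit-square integral of $\log(h_\square - s)$ with $s$ a centered sum of two coordinates, then expand the logarithm and integrate term by term, with the odd powers dropping out by symmetry. Your version is slightly more explicit---you anchor at the corner rather than the center of $R_\square$ and package the symmetry argument via the tent density and the fold $\log(h-\tau)+\log(h+\tau)=\log(h^2-\tau^2)$---but the substance is the same.
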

\begin{rmk}\label{rmk:approximation}
The function $c$ is strictly decreasing in the interval $[1,+\infty)$. It remains between $c(1)=\frac12(3-4\log 2)\approx 0.113706$ and
\[\lim_{x\to+\infty} c(x)=0.\]
This implies that the approximation of the logarithm of a product of any $m$ hook lengths using the integral of $\log\left(\sqrt nh_F(x,y)\right)$ will have an error of order $\leq c(1)\cdot m$. This estimate can be improved in the large-scale case; see Lemma \ref{lem:asymptoticsofremainder}.
\end{rmk}
\begin{proof}
Let $(x_0,y_0)$ be the point at the center of $R_\square$. Recall that the area of $R_\square$ is $1/n$. Then
\begin{align*}
n\int_{R_\square} &\log \left(\sqrt n h_F(x,y)\right)\,dx\,dy =
n\int_{R_\square}\log \left(h_\square-\sqrt n(x-x_0)-\sqrt n(y-y_0)\right)dx\,dy \\
&=\log h_\square+n\int_{R_\square} \log\left(1-\frac{(x-x_0)+(y-y_0)}{h_\square/\sqrt n}\right)dx\,dy \\
&=\log h_\square +  \int_{\sqrt n y_0-\frac12}^{\sqrt n y_0+\frac12}\int_{\sqrt n x_0-\frac12}^{\sqrt n x_0+\frac12}\log\left(1-\frac{(u-\sqrt n x_0)+(v-\sqrt n y_0)}{h_\square}\right)du\,dv,
\end{align*}
where $u=\sqrt n x$ and $v=\sqrt n y$.
Now expand the Taylor series of the logarithm and integrate term by term.
\end{proof}

It is now convenient to change variables to $s$ and $t$ such that $x=\frac1{\sqrt 2}(L(s)-s)$ and $y=\frac{1}{\sqrt 2}(L(t)+t)$. To explain how this change of variables works we have the following diagram.
\begin{center}
\includegraphics{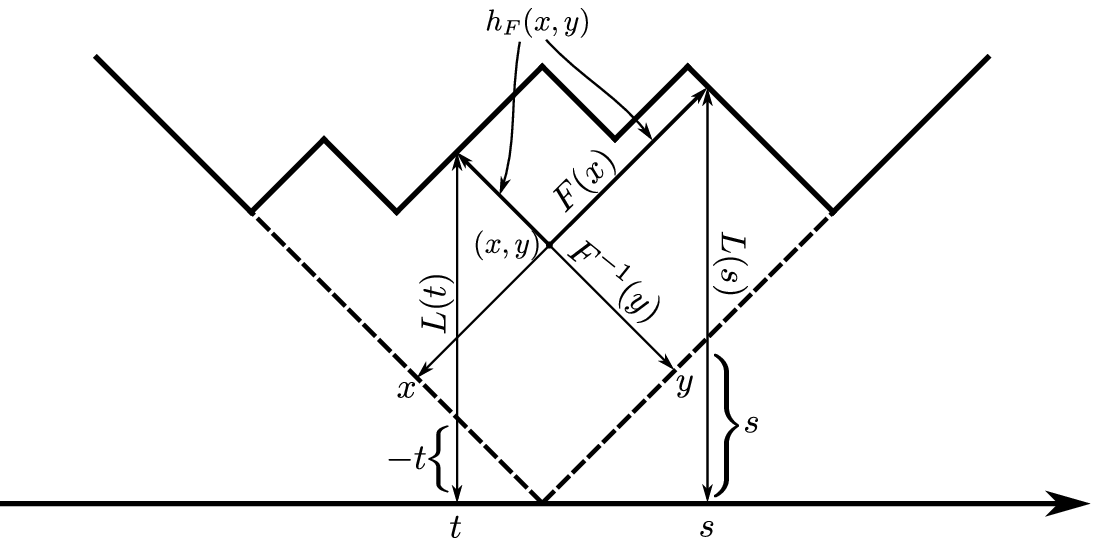}
\end{center}
In the diagram, the point $(x,y)$ is at the center, and through it pass two diagonals of lengths $F(x)$ and $F^{-1}(y)$ intersecting the diagonal axes at coordinates $x$ and $y$, respectively. From their intersections with the rim of the rotated Young diagram, we have vertical segments of lengths $L(s)$ and $L(t)$.
The transform $(s,t)\mapsto (x,y)$ is not bijective, but it is surjective, and it is injective in the region in which its Jacobian is nonzero. The determinant of its Jacobian is
 \[-\frac12\left(1-L'(s)\right)\left(1+L'(t)\right),\]
which is supported in a compact set because $L(x)=|x|$ for large $|x|$.
Our integral becomes, after a couple of integrations by parts:
\begin{multline}
\log \prod_{\square\in\lambda}h_\square=\\
\boxed{\frac n2\int\!\int_{s>t}\left(\log \sqrt 2(s-t)\right)(1-L'(s))(1+L'(t)) \,ds\,dt+\sum_{\square\in \lambda}c(h_\square)}
\label{eq:hookintegral}
\end{multline}
This integral is, in fact, the square of the norm of a Sobolev space, whence it is convex and has good properties for optimization.
Finally, we have:
\begin{lem}\label{lem:asymptoticsofremainder}
\[\sum_{\square\in\lambda} c(h_\square)=O\left(\sqrt{n}\right)\quad \textrm{as $n=|\lambda|\to\infty.$}\]
\end{lem}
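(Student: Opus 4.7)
My plan is to combine the decay $c(x) = O(x^{-2})$ (which follows from the leading term $\tfrac{1}{12 x^2}$ of the defining series) with a row-by-row bound localized to the Durfee square. Because $c$ is a sum of positive decreasing functions of $x$ on $[1,\infty)$, it is itself monotone decreasing there with $c(x) \to 0$, and the series $C_0 := \sum_{k=1}^\infty c(k)$ converges by comparison with $\sum 1/k^2$. The combinatorial input is the familiar fact that within any single row $i$ of a Young diagram the hook lengths are strictly decreasing positive integers:
\[h_{i,j+1} - h_{i,j} = (\lambda'_{j+1}-\lambda'_j) - 1 \leq -1.\]
In particular $h_{i,j} \geq \lambda_i - j + 1$, so by monotonicity of $c$ one gets the row bound
\[\sum_{j=1}^{\lambda_i} c(h_{ij}) \;\leq\; \sum_{k=1}^{\lambda_i} c(k) \;\leq\; C_0,\]
and, by applying the same reasoning to $\lambda'$, an analogous bound for each column.

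The principal obstacle is that summing the row bound over all rows gives only $C_0\cdot\ell(\lambda)$, which is far too weak because $\ell(\lambda)$ can be of order $n$ (for instance, when $\lambda = (m,1^{n-m})$). To extract the correct scale $\sqrt n$, I split the Young diagram along its Durfee square. Let $d = d(\lambda) = \max\{k : \lambda_k \geq k\}$, so that $d^2 \leq n$ and hence $d \leq \sqrt{n}$, and decompose $\lambda$ into the disjoint pieces
\[A = \{(i,j) : i,j \leq d\}, \qquad B = \{(i,j) : i>d,\, j\leq d\}, \qquad C = \{(i,j) : i\leq d,\, j>d\}.\]

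The point is that both $A$ and $C$ lie inside the first $d$ rows, while $B$ lies inside the first $d$ columns. Applying the row bound to each of the $d$ rows meeting $A \cup C$ gives $\sum_{\square \in A \cup C} c(h_\square) \leq 2 C_0 d$, and applying the column bound to each of the $d$ columns meeting $B$ gives $\sum_{\square \in B} c(h_\square) \leq C_0 d$. Adding these and using $d \leq \sqrt{n}$ yields
\[\sum_{\square \in \lambda} c(h_\square) \;\leq\; 3 C_0 d \;\leq\; 3 C_0 \sqrt{n},\]
which is the claimed $O(\sqrt n)$ bound. The only auxiliary verification is the finiteness of $C_0$, which is a routine consequence of $c(k) = O(1/k^2)$; everything else is the observation that the Durfee square is the right device to replace the wasteful factor $\ell(\lambda)$ (or $\lambda_1$) by $\sqrt n$.
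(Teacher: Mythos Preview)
Your argument is correct. The key ingredients---monotonicity of $c$, the fact that the hook lengths along any row (or column) are distinct positive integers, and the Durfee-square split to replace the wasteful factor $\ell(\lambda)$ by $d(\lambda)\leq\sqrt n$---combine cleanly to give the bound. (One cosmetic point: since $A\cup C$ is contained in the first $d$ rows, the row bound already gives $\sum_{\square\in A\cup C}c(h_\square)\leq C_0 d$ rather than $2C_0 d$; the extra factor of $2$ is harmless but unnecessary.)

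This is a genuinely different route from the paper's argument. The paper asserts that, since $c$ is decreasing and vanishes at infinity, the sum $\sum_\square c(h_\square)$ is maximized (for fixed $n$) by the partition with the most small hooks, declares this extremal partition to be the staircase $(\ell,\ell-1,\dots,1)$, and says the estimate is then ``obvious.'' That argument is terse to the point of being a sketch: the claim that the staircase is the exact maximizer is plausible but not justified there, and one still has to check the staircase case. Your approach avoids any extremal characterization entirely: you never ask which partition is worst, you simply bound every partition directly. The cost is a few more lines; the gain is a self-contained argument with an explicit constant $3C_0$ (indeed $2C_0$) and no appeal to an unproved extremal claim.
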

\begin{proof}
Since $c$ is a decreasing function and $c(h)\to0$ very quickly as $h\to\infty$, the worst case scenario is the case in which we maximize the number of \emph{small} hook lengths $h_\square$. This happens in the case of the \emph{staircase partition} $(\ell,\ell-1,\dots,2,1)$, and in that case the estimate is obvious.
\end{proof}

\section{Probability distributions and limit shapes}
\label{sec:probability}

\subsection{The uniform distribution}
\label{sec:uniform}
Let $\Young_n$ be the set of Young diagrams corresponding to all partitions of $n$, rescaled by $1/\sqrt n$. This means that we construct the diagrams with squares of side $1/\sqrt n$.

It is a very interesting phenomenon that, for large $n$, most of the diagrams in $\Young_n$ are very similar. In fact, once rescaled, most of them are very close to the curve
\begin{equation}\label{eq:limitshapeuniform}
e^{-\frac\pi{\sqrt6}x}+e^{-\frac\pi{\sqrt6}y}=1.
\end{equation}
Here is a simulation:

\begin{center}
\includegraphics{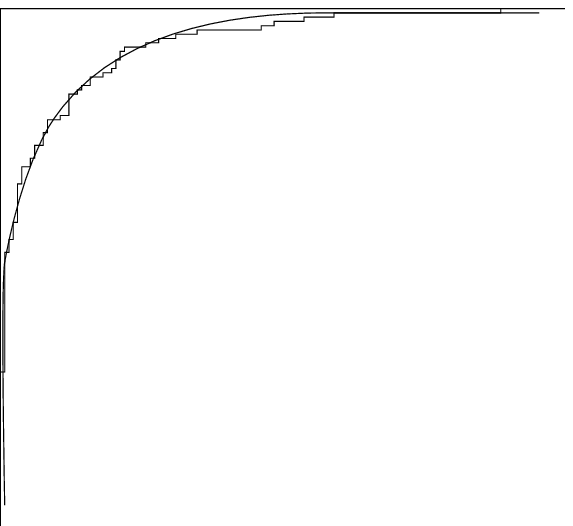}
\end{center}

In the picture we see the graph of the curve corresponding to the limit shape, together with the contour of a random partition of very large size $n$ rescaled by $1/\sqrt n$. The $x$-axis is horizontal and increases to the right; the $y$-axis is vertical and increases downwards.

This phenomenon was apparently first remarked by A. Vershik \cite{vershik96}, who mentioned it informally to the Szalay and Turan after they had published a paper \cite{szalayturan} from where it can be deduced immediately. 

Now, let us explain how one can arrive at this conclusion. We will do this in two ways.
First, let us apply the variational method explained in Section \ref{sec:entropy}. One does standard variational calculus (see for example \cite{gelfandfomin}) to find that the (Euler-Lagrange) equation  that must be satisfied by any function $f$ maximizing the entropy integral \eqref{eq:entropyintegral} is
\[f''=1-(f')^2.\]
It is easy to see that, after a rescaling to ensure that the limit shape has area 1, the only possible candidate is then \eqref{eq:limitshapeuniform}. To complete the argument, one must prove that the entropy integral \eqref{eq:entropyintegral} is upper-semicontinuous and from there obtain a large deviation principle. We will not go into the details, but they can be found for example in \cite{okounkovlimitshapes}.

Our second way to deduce the form of the limit shape leverages the tools we developed in Sections \ref{sec:shiftedsymmetric}, \ref{sec:fockspace} and \ref{sec:traces}. This will seem like a huge roundabout, but it is perhaps the simplest illustration of how one can deal with more complicated problems. The idea is similar to the one used to prove Wigner's semicircle law: we will find the moments of the limit shape, and since they turn out to coincide with those of our candidate \eqref{eq:limitshapeuniform}, we can apply a standard result \cite[Chapter 30]{billingsley} in probability that says that two densities must coincide if they have the same moments $\mu_k$ and these satisfy the growth condition
\[\limsup_{k\to\infty}\frac1{2k}\mu_{2k}^{1/2k}<\infty.\]

Instead of looking at each of the layers $\mathcal Y_n$ of partitions Young diagrams of partitions of $n$ separately, we introduce the \emph{$q$-coupled probability distribution} $m_q$ on the full set of Young diagrams $\bigcup_n \mathcal Y_n$. It will be given by
\[m_q(\lambda)=\frac{q^{|\lambda|}}{Z},\quad
Z=\sum_\nu q^{|\nu|}=\prod_{n=1}^\infty(1-q^n)^{-1}\]
for\footnote{We can only interpret these as probabilities if $q\in \R$, but it is useful to keep in mind the complex-analytic properties of the functions involved.} $|q|<1$, $q\in \C$.
 In the denominator we have normalized using the generating function $Z$ of partitions, whose product form is easy to verify.
In statistical mechanics the process of taking into account all possible states of all possible energies simultaneously is called `passing to the grand canonical ensemble.' In this regime, the limit $q\nearrow1$ corresponds to the limit $|\lambda|\to\infty$, as we will see below.

We introduce the \emph{1-point function}
\[F(e^r)=\frac1r+\sum_{j\geq0}\frac{r^j}{j!}\langle\p_j\rangle,\]
where the expectations of the shifted-symmetric power functions $\p_j$ are given by
\[\langle\p_j\rangle=\sum_{\lambda}\p_j(\lambda)\frac{q^{|\lambda|}}Z.\]
Because of the convergence of the expectations of the shifted-symmetric power functions $\p_k(\lambda)$ to the moments of the limit shape, as explained in Section \ref{sec:shiftedsymmetric}, the asymptotics of $F$ encode the moments of the limit shape (which we aim to compute).

There is another useful interpretation for $F$. Denote by $\mathfrak s(\lambda)$ the set of shifted coordinates $\lambda_i-i+\frac12$ of $\lambda$.
As we rescale the partition by $1/\delta$ and let $|\lambda|\to\infty$, we expect to have $f'(x)$ close to 1 in the regions where there is a high probability of finding one of these rescaled shifted coordinates $\delta(\lambda_i-i+\frac12)$ (because they correspond to the presence of a black pebble), and we expect $f'(x)$ to be close to $-1$ in the regions where this probability is very low. In other words,
\[f'(x)\approx\lim_{\delta\to0}\mathbf P(x/\delta\in \mathfrak s(\lambda)).\]
If we take the generating function of these probabilities,
\[\sum_{x\in\Z+\frac12}u^x\sum_{\mathfrak s(\lambda)\ni x} \frac{q^{|\lambda|}}Z,\]
where the second sum is taken over all partitions $\lambda$ such that $x$ is in the set of shifted coordinates $\mathfrak s(\lambda)$.
By equation \eqref{eq:generatingshiftedsympower}, this is the same as $F(u)$, where $u=e^r$.

Using this interpretation and the language of the half-infinite wedge, we rewrite $F(u)$ as a trace,
\[
F(u)=\frac1Z\sum_{x\in\Z+\frac12}u^x\trace q^H\psi_{ x}\psi^*_{x}=\frac1Z[v^0]\trace q^H\psi(uv)\psi^*(v).
\]
Our notations are as in equation \eqref{eq:bosongf}, and $[v^0]$ stands for the operation of ``taking the constant term in the series expansion with respect to $v$.'' The idea is that the composition of operators $\psi_x\psi^*_x$ will select the vectors $v_\lambda$ such that $x$ is in $\mathfrak s(\lambda)$. Apply formulas \eqref{eq:bosongf}, \eqref{eq:tracetoprod} and \eqref{eq:specialtrace} to find that
\begin{align*}
\frac1Z\trace q^H\psi(uv)\psi^*(v)&=\frac{1}{x^{1/2}-x^{-1/2}}\exp\sum_{n=1}^\infty
\frac{\left((uv)^n-v^n\right)\left(v^{-n}-(uv)^{-n}\right)q^n}{n(1-q^n)}\\
&=\frac{1}{x^{1/2}-x^{-1/2}}\exp\sum_{n=1}^\infty\left(u^n+u^{-n}-2\right)
\frac{\sum_{k=1}^\infty q^{nk}}{n}\\
&=\frac{1}{x^{1/2}-x^{-1/2}}\prod_{k=1}^\infty
\frac{(1-q^k)^2}{(1-q^ku)(1-q^k/u)}=\frac1{\vartheta(u)},
\end{align*}
where $\vartheta$ is the Jacobi theta function defined in equation \eqref{eq:jacobithetadef} and we used that
\[\log(1-a)=-\sum_{n=1}^\infty \frac{a^n}n.\]
So we have
\[F(u)=\frac1{\vartheta(u)}.\]

Using the same techniques as above, it is an easy exercise to see that, in the case of the uniform distribution, the generating functions of $n$-point correlations are given by
\[\sum_{i_1,i_2,\dots,i_n\in\Z+\frac12}u_1^{i_1}\cdots u^{i_n}_n\mathbf P_{\mu_q}\left(i_1,\dots,i_n\in\mathfrak s(\lambda)\right)=\prod_{k=1}^n\frac1{\vartheta(u_k)}\]
(where $\mathbf P_{\mu_q}$ denotes the probability with respect to the distribution $\mu_q$ that assigns weights $\mu_q(\lambda)=q^{|\lambda|}$),
and this is what is called the \emph{$n$-point function}. This is great news, because the fact that it factors as a product of thetas implies all sorts of good quasimodularity properties; in particular, it means that the $\mu_q$-expectations of the functions $\p_k$ (whose highest degree terms encode the moments of the limit shape as $q\to1$) are quasimodular forms. (Compare with Section \ref{sec:quasimodular}.)

We turn to the issue of understanding the asymptotic behavior of this distribution. As explained above, we want to be able to extract information about the moments of the limit shape from the $n$-point function.

In order to get an idea of how the moments will scale as $q\to1$, it is insightful to compute the leading term of the asymptotics of the expectation of the size of the partition,
\[\left\langle|\lambda|\right\rangle=\frac{\sum_\lambda |\lambda|q^{|\lambda|}}Z= \frac{qZ'}Z.\]
Recall that if $q=e^{2\pi i\tau}$, then the Dedekind eta function $\eta$ satisfies
\[\eta(\tau)=\sqrt\frac i\tau\eta\left(-\frac1\tau\right)=q^{1/24}\sum_{n\in\Z}(-q)^nq^{\frac{3n^2-n}2}
=q^{1/24}\prod_{n=1}^\infty(1-q^n).\]
Hence, as $\tau\to 0+0i$,
\begin{multline*}
Z=\prod_{n=1}^\infty(1-q^n)^{-1}=\frac{q^{1/24}}{\eta(q)}=\frac{q^{1/24}}{\sqrt{\frac i\tau}\eta\left(-\frac1\tau\right)}=\\
e^{\frac{\pi i \tau}{12}+\frac{\pi i}{12\tau}}\left(\sum_n(-1)^n e^{-\frac{\pi i}\tau(3n^2-n)}\right)^{-1}\sqrt{\frac\tau i}
\approx e^{\frac{\pi i}{12\tau}}\sqrt{\frac\tau i}.
\end{multline*}
Plugging this approximation into the expectation, we see its highest order asymptotics are given by
\[\frac{qZ'}{Z}\approx\frac{4\pi}{\tau^2}.\]
This is the expected size of the Young diagram of a partition when $q$ is close to 1. If we want to rescale by $1/\delta$ so that $\delta^2|\lambda|=1$, we see that we must keep $\delta\approx \tau/i$ (up to a real constant).

To respect this scaling, we will apply Lemma \eqref{lem:theta1} to the 1-point function rescaling also the variables, as follows:
\[
F\left(e^{\tau r/i}\right)=\frac{1}{\vartheta(e^{\tau r/i},e^{2\pi i \tau})}
\approx\frac{1}{2i \tau \sin(r/2)}\exp\left(\frac{\tau r^2}{4\pi}\right)
\]
The exponential term clearly tends to 1 as $\tau\to0$. The series of the cosecant encodes exactly the moments of \eqref{eq:limitshapeuniform} (up to a constant due to rescaling). Let us verify this. To compute the moments, change variables $x=s+t$ and $y=s-t$ and take the same integral as in equation \eqref{eq:momentintegral}:
\begin{align*}
\mu_{2k}&=\int_{\R}t^{2k}\left(L_\infty(t)-|t|\right)dt\\
&=
\int_{\R}t^{2k}
\left(\tfrac{\sqrt6}\pi\log\left[2\cosh \left(\tfrac{\pi}{\sqrt 6\,}t\right)\right]-|t|\right)dt\\
&=\frac{6^{k+1}}{2^{2k} \pi^{2k+2}}\int_{\R_+}t^{2k}\log\left(1+e^{-t}\right)dt\\
&=\frac{3^{k+1}}{2^{k-1}\pi^{2n+2}}\left(1-2^{-2n-1}\right)\Gamma(2n+1)\zeta(2n+2)
\end{align*}
by the Mellin transform (compare \cite[page 99]{branchedcoverings}), and $\mu_n=0$ for odd $n$. One can check that the even terms in the series expansion of $1/2\sin(r/2)$ are
\[\frac{1}{(2k-1)!}\frac{1}{(2\pi)^{2k+2}}\left(1-{2^{-2k-1}}\right)\Gamma(2k+1)\zeta(2k+2),\]
and the difference between these two things can be traced down to the scaling, but we will not do it here.

The existence of the limit shape also follows as a special case of the Large Deviation Principle proved by A. Dembo, A. Vershik and O. Zeitouni \cite{dembovershikzeitouni}.

\subsubsection*{Typical dimension.}
In order to investigate what the dimension is for a typical partition in the uniform distribution as $|\lambda|\to\infty$, we will use the hook formula \eqref{eq:hookformula}, Stirling's approximation for $n!$, and the approximation given by the hook integral \eqref{eq:hookintegral}:
\begin{align*}
\log\dim \lambda&=\log\frac{n!}{\prod_{\square\in\lambda}h_\square}\\
&\approx n\log n-n-\frac n2\int\!\int_{s>t}\log\sqrt 2(s-t)(1-L_\infty'(s))(1+L_\infty'(t))\,ds\,dt\\
&\approx n\log n-n\,0.628699\dots
\end{align*}

\subsection{The Plancherel distribution}
\label{sec:plancherel}
The \emph{Plancherel distribution} seems to have first appeared in the search for the asymptotics of the maximal dimension of an irreducible representation of $S(n)$. It assigns the weight
\[p(\lambda)=\frac{(\dim\lambda)^2}{|\lambda|!}=\frac{|\lambda|!}{ \left(\prod_{\square \in\lambda}h_\square\right)^2}\]
to the partition $\lambda$. (The equivalence of these two definitions follows from equation \eqref{eq:hookformula}.) By Burnside's formula,
\[n!=\sum_{|\lambda|=n}\left(\dim\lambda\right)^2,\]
the weights $p(\lambda)$ define a probability on the set $\mathcal Y_n$ of Young diagrams of partitions of $n$. It was named ``Plancherel'' because the Fourier transform $\mathcal F$, understood as a map
\[\mathcal F:L^2\left(S(n)^{S(n)},\mu_{\mathrm{Haar}}\right)\to L^2\left(\widehat{S(n)},\mu_\textrm{Plancherel}\right)\]
(where $S(n)^{S(n)}$ is the set of conjugacy classes of $S(n)$, and $\widehat{S(n)}\cong \mathcal Y_n$ denotes the space of complex finite-dimensional irreducible representations) is an isometry, like in the classical Plancherel theorem.

The variational methods explained in Section \ref{sec:hooksvariational} were originally developed for the study of this distribution \cite{loganshepp,vershikkerov1985}.
The maximizer of the resulting integral \eqref{eq:hookintegral} is the curve
\[L_\infty(x)=\left\{\begin{array}{ll}
\frac{2}\pi\left(x\arcsin x+\sqrt{1-x^2}\right)& \textrm{for $|x|\leq 1$,}\\
|x|& \textrm{for $|x|\geq 1$.}
\end{array}\right.
\]
For any other curve $\Omega$,  let $h(x)=\Omega(x)-L_\infty(x)$ be the variation. We will assume that it is compactly supported. In this case one has that the hook integral \eqref{eq:hookintegral} has the form
\begin{multline*}
-\frac n2\int\!\!\int_{\R^2}\log\sqrt2|s-t|h'(s)h'(t)\,ds\,dt\\
+\frac n2\int\!\!\int_{|s|,|t|>1} \log\sqrt2|s-t|\left(h'(s)-h'(t)\right)ds\,dt\\
=-\frac{n}{4}\int\!\!\int_{\R^2}\left(\frac{h(s)-h(t)}{s-t}\right)^2ds\,dt.
\end{multline*}
The second term in the left hand side disappears because of symmetry. To obtain the right hand side form the first term, we basically do two integrations by parts. We thus obtain a constant times the square of what actually is the norm in a Sobolev space, and it follows that the maximum is attained when $h=0$. The full central limit theorem is attributed to S. Kerov, but its proof was first written up by V. Ivanov and G. Olshanski \cite{ivanovolshanski}.

While the author is not aware of any methods to efficiently deal with this distribution using the half-infinite wedge fermionic Fock space, many interesting facts are known about this distribution. There are for example some papers that analyze very carefully the local correlations (e.g., \cite{borodinokounkovolshanski,bufetov}), and the papers dealing with the resolution of the Baik-Deift-Johansson conjecture \cite{baikdeiftjohansson} are very interesting (e.g., \cite{borodinokounkovolshanski,johansson,okorandommatrices}).

\subsubsection*{Typical dimension.} In the case of the Plancherel distribution, the typical dimension turns out to be the same as the maximal dimension \cite{vershikkerov1985}.

\subsection{The pillowcase distribution}
\label{sec:pillowcase}
The \emph{pillowcase distribution} initially appeared in \cite{pillow} in connection with the computation of the volumes of the moduli spaces of quadratic differentials, and was further studied in \cite{mythesis,mypillowcase}. Given a parameter $q$, $0<|q|<1$, it assigns to each partition $\lambda$ the weight $q^{|\lambda|}\w(\lambda)/Z$, where\footnote{ The original definition was given in \cite{pillow} in terms of characters of near-involutions. The equivalence is given in \cite{mythesis,mypillowcase}.}
\[\w(\lambda)=\left(\frac{\prod\textrm{odd hook lengths of $\lambda$}}{\prod\textrm{even hook lengths of $\lambda$}}\right)^2\]
to partitions $\lambda$ with empty 2-core, and no weight to any other partitions. The normalization constant $Z$ is known to be
\[Z=\prod_{n=1}^\infty(1-q^{2n})^{-1/2}.\]

In the case of the pillowcase distribution, one can apply the variational methods of Section \ref{sec:hooksvariational} as follows: Since the relevant partitions $\lambda$ have empty 2-core, their 2-quotient $(\alpha,\beta)$ determines them completely. According to the picture drawn in Section \ref{sec:pquotients}, the odd hook lengths of $\lambda$ correspond to twice the hook lengths of $\alpha$ and $\beta$, while the odd hooks of $\lambda$ correspond to the interactions of the Maya diagrams of $\alpha$ and $\beta$: there will be an odd hook whenever there is a pebble on the Maya diagram of $\alpha$ (resp. $\beta$) with a corresponding hole on the Maya diagram of $\beta$ (resp. $\alpha$), to the right of it.
Since the Maya diagrams of $\alpha$ and $\beta$ are interlaced to form the Maya diagram of $\lambda$, their contours interlace too.
Letting $L_\alpha$ and $L_\beta$ be the contours of the partitions $\alpha$ and $\beta$,  we have
\[L'_\lambda(s)=\left\{\begin{array}{ll}
L'_\alpha\left(s-\frac{k}{2\sqrt{2n}}\right), &
\textrm{for $s\in\frac1{\sqrt{2n}}(k-1,k),k\in2\Z,$}\\
L'_\beta\left(s-\frac{k}{2\sqrt{2n}}\right), &
\textrm{for $s\in\frac1{\sqrt{2n}}(k,k+1),k\in2\Z,$}
\end{array}\right.\]
Going through the derivation of equation \eqref{eq:momentintegral} again, one sees that the integral for the part corresponding to the even hooks is
\begin{multline*}
-n\!\int_{t<s}\!\!\left(\log\sqrt2(s-t)\right)\!\big(\!\left(1-L'_\alpha(s)\right)\!
\left(1+L'_\alpha(t)\right)
+\left(1-L'_\beta(s)\right)\!\left(1+L'_\beta(t)\right)\!\big)
ds\,dt,
\end{multline*}
and the one for the odd hooks is
\begin{multline*}
n\int_{t<s}\left(\log\sqrt2(s-t)\right)\!\big(\!\left(1-L'_\alpha(s)\right)\!
\left(1+L'_\beta(t)\right)
+\left(1-L'_\beta(s)\right)\!\left(1+L'_\alpha(t)\right)\!\big)
ds\,dt.
\end{multline*}
Adding and simplifying, one finds
\[ n\int_{\R^2}\left(\log\sqrt2|s-t|\right)\Delta'(s)\Delta'(t)\,ds\,d\]
where $\Delta(s)=L_\alpha(s)-L_\beta(s)$. After two intergrations by parts,
\[\log\w(\lambda)\approx t=-\frac n2\int_{\R^2}\left(\frac{\Delta(s)-\Delta(t)}{s-t}\right)^2ds\,dt+O\left(\sqrt n\right).\]
This integral is again a multiple of the square of the norm $\|\Delta\|^2$ of $\Delta$ in a Sobolev space. Whence one gets a hint that, in the limit, the pillowcase weights induce a distribution concentrated mostly near the partitions for which $\Delta=0$, that is, for which the 2-quotients are equal, and that in that region the distribution is more or less uniform. So one expects to get the same limit shape as for the uniform distribution, and one can indeed prove some statements in this direction \cite{mypillowcase,mythesis}. However, the term $O\left(\sqrt n\right)$ makes this conclusion hard to obtain using this method.

\subsubsection*{Fock space method.} It was found in \cite{pillow} that the \emph{pillowcase operator}, given by
\[\mathfrak W=\exp\left(\sum_{n>0}\frac{\alpha_{-2n-1}}{2n+1}\right)
\exp\left(-\sum_{n>0}\frac{\alpha_{2n+1}}{2n+1}\right),\]
has the property that its diagonal elements exactly coincide with the pillowcase weights:
\[\left\langle\mathfrak Wv_\lambda,v_\lambda\right\rangle=\w(\lambda).\]
One can thus proceed in the same way as we did in Section \ref{sec:uniform}: the 1-point function becomes
\[F\left(u\right)=\frac1Z[v^0]\trace q^H\mathfrak W\psi(uv)\psi^*(v).\]
Turning the crank as in Section \ref{sec:uniform}, one sees that
\[F\left(u\right) = \frac{1}{\vartheta(u)}[v^0]\sqrt{\frac{\vartheta(-v)\vartheta(uv)}
{\vartheta(v)\vartheta(-uv)}}.
\]
Similarly, the $n$-point function also has a closed expression in terms of modular forms, and again we have good quasimodularity properties. Applying Lemmas \ref{lem:theta1} and \ref{lem:theta2}, one obtains the desired result: essentially, in the $q\to1$ limit, the quotient inside the square root tends to 1 and the limit shape indeed coincides with the one for the uniform distribution. The typical dimension thus also coincides with the one for the uniform distribution.


\bibliography{bib}{}
\bibliographystyle{plain}

\end{document}